\documentclass[a4paper,10pt]{article}

\usepackage{tikz}

\usepackage{enumerate}
\usepackage{amsmath}

\usepackage{amsthm}

\usepackage{psfrag}

\usepackage{amssymb}
\usepackage{latexsym}

\newtheorem{Theorem}{Theorem}

\newtheorem{Definition}[Theorem]{Definition}
\newtheorem{Proposition}[Theorem]{Proposition}
\newtheorem{Lemma}[Theorem]{Lemma}
\newtheorem{Corollary}[Theorem]{Corollary}
\newtheorem{Remark}[Theorem]{Remark}

\def\P{{\mathcal P}}

\def\M{{\mathcal M}}

\def\ZZ{{\mathbb Z}}

\def\cc{{\mathbf c}}
\def\xx{{\mathbf x}_0}

\def\RR{{\mathbb R}}
\def\HH{{\mathbb H}}
\def\XX{{\mathbb X}}
\def\SS{{\mathbb S}}
\def\LL{{\mathbb L}}

\begin{document}

\title{Hyperbolic polygons of minimal perimeter in punctured discs}

\author{Joan Porti\footnote{ Partially supported  by grant MTM2015--66165--P (Mineco/FEDER)}
} 
\date{\today}

\maketitle

\begin{abstract}
 We prove that, among the polygons in a punctured disc with fixed angles, the 
perimeter is minimized by the polygon with an inscribed horocycle centered at 
the puncture. We 
generalize this to a disc with a cone point and to an annulus with a geodesic 
boundary component and a complete end. Then we apply this result to describe 
the 
minimum of 
the spine systole on the moduli space of punctured surfaces.
\end{abstract}

\section{Introduction}


Consider a complete hyperbolic disc with a puncture, i.e.\ with a cusp, 
$\XX_0=\HH^2/\langle 
\gamma_0\rangle$ where $\langle \gamma_0\rangle$ is the infinite cyclic group 
generated by a parabolic transformation $\gamma_0\in\operatorname{Isom}^+\HH^2$.
Fix $n\geq 1$ and $0<\beta_1,\ldots,\beta_n<\pi$  a family of angles.
Define 
$\P$ to be the space of polygons in $\XX_0$ with those 
(counterclockwise ordered) angles,
that separate both ends of $\XX_0$, and so that the cusp lies in the convex 
side 
of each angle. In Lemma~\ref{Lemma:existence} below we show that  $\P\neq 
0$, even for $n=1$. We prove:

\begin{Theorem}
\label{theoremc}
The unique minimum of the perimeter in $\mathcal P$ is realized by the polygon 
with an inscribed horocycle centered at the cusp.
\end{Theorem}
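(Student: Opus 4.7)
The plan is to lift to the universal cover and re-express the perimeter as a separable sum of convex one-variable functions, so that the minimum can be identified by inspection.

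\emph{Lift and parametrization.} I work in the upper half-plane model of $\HH^2$, normalized so that the cusp is at $\infty$ and $\gamma_0(z)=z+1$. A polygon in $\P$ lifts to a $\gamma_0$-invariant polyline in $\HH^2$ with vertices $v_i$ ($i\in\ZZ$, $v_{i+n}=\gamma_0(v_i)$) and geodesic edges $e_i=[v_{i-1},v_i]$; each $e_i$ is an arc of a Euclidean semicircle of some radius $r_i$ with $r_{i+n}=r_i$. Let $\theta_i^L,\theta_i^R\in(0,\pi/2)$ denote the angles at $v_{i-1}$ and $v_i$ respectively between the upward vertical direction and the tangent to $e_i$ pointing toward the peak (the highest point of the semicircle). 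Generically the peak of each edge lies strictly between its two endpoints, so this parametrization is valid; handling the degenerations is the main technical point (see below). The interior angle at $v_i$ on the cusp side reads
\[ \beta_i = \theta_i^R+\theta_{i+1}^L, \]
and a standard computation with the upper half-plane metric yields the length formula
\[ \operatorname{length}(e_i) = \ln\cot(\theta_i^L/2)+\ln\cot(\theta_i^R/2). \]

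\emph{Separable convex minimization.} Eliminating $\theta_i^L=\beta_{i-1}-\theta_{i-1}^R$ via the angle constraint and reindexing, the perimeter becomes the separable sum
\[ P = \sum_{i=1}^n f_i(\theta_i^R),\qquad f_i(\theta):=\ln\cot(\theta/2)+\ln\cot\bigl((\beta_i-\theta)/2\bigr). \]
A direct calculation gives $f_i'(\theta)=-\csc\theta+\csc(\beta_i-\theta)$ and $f_i''(\theta)=\csc\theta\cot\theta+\csc(\beta_i-\theta)\cot(\beta_i-\theta)$, which is strictly positive on the admissible range (both $\theta$ and $\beta_i-\theta$ in $(0,\pi/2)$). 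Hence each $f_i$ is strictly convex with unique minimum at $\theta=\beta_i/2$. The $\theta_i^R$ are not wholly free: matching the heights $y_i = r_i\sin\theta_i^R = r_{i+1}\sin(\beta_i-\theta_i^R)$ at each vertex, together with $r_{n+1}=r_1$, reduces to the single scalar constraint
\[ \prod_{i=1}^n\sin\theta_i^R = \prod_{i=1}^n\sin(\beta_i-\theta_i^R), \]
which is satisfied automatically at $\theta_i^R=\beta_i/2$. Consequently, the unconstrained minimum of $\sum f_i$ already lies on the constraint surface and is therefore the unique global minimum of $P$ on $\P$ (the horizontal translation freedom in the upper half plane corresponds to rotating around the cusp and does not affect $P$). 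At this minimum $r_{i+1}/r_i=\sin(\beta_i/2)/\sin(\beta_i/2)=1$, so all edges have equal radius $r$ and are simultaneously tangent to the horocycle $\{\operatorname{Im}z=r\}$, which descends to the inscribed horocycle centered at the cusp.

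\emph{The main obstacle.} The essential geometric input is a preliminary lemma showing that, for every polygon in $\P$, the peak of each lifted edge lies strictly between its two endpoints, so that $\theta_i^L,\theta_i^R\in(0,\pi/2)$ everywhere. This should follow from the hypotheses $\beta_i\in(0,\pi)$ and the cusp lying on the convex side of each angle, perhaps combined with a short perturbation argument showing that any degenerate edge configuration can be strictly improved. Once this lemma is in hand, the rest of the proof is a transparent finite-dimensional convex-optimization computation whose minimizer is identified by inspection.
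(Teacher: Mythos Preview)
Your argument is essentially correct and follows a genuinely different route from the paper, but you have misdiagnosed the one gap. The peak condition \emph{can} fail (once some $\beta_j>\pi/2$ one can deform the inscribed-horocycle polygon inside $\P$ until an edge no longer contains the top of its semicircle), so the preliminary lemma you hope for is false. What saves you is that the condition is unnecessary. Define $\theta_i^{L}$ (resp.\ $\theta_i^{R}$) as the unsigned angle in $(0,\pi)$ between the upward vertical at $v_{i-1}$ (resp.\ $v_i$) and the ray pointing \emph{into} $e_i$. Convexity of the cusp side forces the lifted polyline to be the graph of a function of $x$ (every vertical geodesic meets the convex region containing $\infty$ in a ray), so the vertical lies strictly inside each $\beta_i$ and $\theta_i^R+\theta_{i+1}^L=\beta_i$ always. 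Writing $v_{i-1}$ at parameter $\phi_L$ and $v_i$ at $\phi_R<\phi_L$ on the Euclidean semicircle, one has $\theta_i^L=\pi-\phi_L$ and $\theta_i^R=\phi_R$ in \emph{every} configuration, and then
\[
\operatorname{length}(e_i)=\ln\cot(\theta_i^L/2)+\ln\cot(\theta_i^R/2),
\qquad
y(v_i)=r_i\sin\theta_i^R=r_{i+1}\sin\theta_{i+1}^L
\]
hold identically, with no case distinction. On the enlarged domain $\theta\in(0,\beta_i)$ the function $f_i$ is no longer convex, but you do not need convexity: $f_i\to+\infty$ at both endpoints and $f_i'(\theta)=-\csc\theta+\csc(\beta_i-\theta)$ vanishes only at $\beta_i/2$ (since $\beta_i<\pi$), so each $f_i$ still has a unique global minimum there. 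The rest of your argument then goes through verbatim.

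For comparison, the paper works in the Lorentz model: it shows the perimeter is proper on $\overline\P$, characterizes critical points by a rank computation in $\RR^2_1$ as exactly those polygons with $e_1\cdot\xx=\cdots=e_n\cdot\xx$ (equivalently, with an inscribed equidistant), proves boundary configurations can always be improved, and constructs the inscribed-equidistant polygon by hand. Your proof is more elementary and pins down the minimizer by a transparent separable optimization, but it is tailored to the cusp; the upper-half-plane parametrization and the product constraint have no clean analogue when $\cc$ is a cone point or a boundary geodesic, which is precisely the generality (Theorem~\ref{theoremMain}) the paper's Lorentzian machinery is built to deliver.
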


The case of a disc without any puncture (i.e.~the hyperbolic plane $\HH^2$)  
was considered in  
\cite{Porti}.  The
generalization in this paper is motivated by an application to spines of 
minimal 
length of hyperbolic surfaces.
A spine of a surface with finite topological type is a graph so that 
the 
surface retracts to it (for a closed surface one removes a point). Martelli, 
Novaga, Pluda, and Riolo \cite{Martelli} 
have shown  that for each \emph{closed} hyperbolic surface there 
are finitely many spines of minimal length, and their proof applies to the non 
compact case. Those spines are graphs with geodesic edges and  with 
trivalent 
vertices,
forming angles $2\pi/3$.

Let $\M_{g,p}$ denote the moduli space of a surface of genus 
$g$ with $p\geq 1$ punctures, with $p\geq 3$ when $g=0$.
The minimal length of a spine  is
called
the \emph{spine systole} of a surface and defines a function
$$
S\colon \M_{g,p}\to (0,+\infty).
$$
We see in Corollary~\ref{cor:proper} that $S$ is a proper function.

\begin{Corollary}
 \label{coro:spine} 
The minimum of $S\colon \M_{g,p}\to \RR$ is realized precisely by
subgroups of the modular group, i.e.~by surfaces 
$\HH^2/\Gamma$ with $\Gamma$ a subgroup of the congruence group $\Gamma(2)$.
 \end{Corollary}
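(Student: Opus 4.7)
The plan is to apply Theorem~\ref{theoremc} face-by-face to a length-minimising spine of $\Sigma=\HH^2/\Gamma\in\M_{g,p}$. By the cited theorem of Martelli--Novaga--Pluda--Riolo, such a spine is a trivalent graph with geodesic edges meeting at angles $2\pi/3$, and each of its complementary faces is a once-punctured disc neighbourhood of a cusp bounded by a polygon in $\P$ whose angles are all $2\pi/3$.

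I would first compute, by an elementary upper half plane calculation, the perimeter of the optimal polygon furnished by Theorem~\ref{theoremc} when all $n$ angles equal $2\pi/3$: the tangency and angle conditions force consecutive tangency points of the inscribed horocycle to lie at hyperbolic distance $1$ from one another, and each side to have length $\ln 3$, so the optimal perimeter equals $n\ln 3$. Summing the face bound over the $p$ faces, using $\sum n_i = 2E$ and the trivalent Euler identity $V-E+F=2-2g$ (which gives $E=3(2g+p-2)$), yields
\begin{equation*}
S(\Sigma) \;\ge\; 3(2g+p-2)\,\ln 3.
\end{equation*}
I would then exhibit attainment for $\Gamma\subset\Gamma(2)$ by descending the $\Gamma(2)$-invariant Farey tessellation of $\HH^2$ to an ideal triangulation of $\Sigma$; its dual graph, with vertices at triangle incentres and one edge of length $\ln 3$ across each shared side, is a trivalent spine with $2\pi/3$ angles of total length exactly $E\ln 3$.

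The hardest step is the converse: equality forces $\Gamma$ to sit inside $\Gamma(2)$. Equality in the face-wise application of Theorem~\ref{theoremc} forces every face of a minimising spine to carry an inscribed horocycle centred at its cusp, and a crucial geometric observation is that such an optimal horocycle is tangent to each side at the side's midpoint. Consequently the three horocycles inscribed in the three faces meeting at a spine vertex are pairwise tangent, each pair at the midpoint of the shared spine edge. Three mutually tangent horocycles based at three distinct cusps bound a unique ideal triangle whose incentre is exactly the spine vertex; assembling these triangles across $\Sigma$ produces an ideal triangulation dual to the minimising spine, all of whose triangles are congruent to the standard ideal triangle. This identifies $\Sigma$ with a quotient of $\HH^2$ by a subgroup of the isometry group of the Farey tessellation; a concluding combinatorial step matching the natural bipartition of Farey triangles with the index-six embedding $\Gamma(2)\subset\mathrm{PSL}(2,\ZZ)$ then pins $\Gamma$ inside $\Gamma(2)$.
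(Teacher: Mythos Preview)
Your approach is correct in outline and runs parallel to the paper's, though you are more explicit where the paper is more structural. Both begin by cutting along a minimal spine and applying Theorem~\ref{theoremc} face by face. For the converse direction (equality forces $\Gamma$ into the modular group), the paper simply observes that a surface all of whose spine faces carry inscribed horocycles is an orbifold cover of the modular orbifold $\HH^2/\mathrm{PSL}(2,\ZZ)$, whence $\Gamma<\mathrm{PSL}(2,\ZZ)$, and then cites \cite{Gendulphe} for the refinement to $\Gamma(2)$. You instead build the dual ideal triangulation by hand from the mutually tangent horocycles and argue that its lift to $\HH^2$ is the Farey tessellation. Your route has the pleasant side effect of yielding the numerical lower bound $3(2g+p-2)\ln 3$ directly, which the paper establishes separately as Corollary~\ref{cor:length}.

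Two points need tightening. First, the clause ``all of whose triangles are congruent to the standard ideal triangle'' is vacuous, since every hyperbolic ideal triangle is isometric to every other; what you actually need is that adjacent triangles are glued with \emph{zero shear} along each edge, and this is precisely what your midpoint-tangency observation delivers (the feet of the perpendiculars from the two incentres to the common edge coincide at the midpoint of the dual spine edge). You should make that step explicit. Second, your final sentence conflates the checkerboard bipartition of Farey triangles, which singles out an index-$2$ subgroup of $\mathrm{PSL}(2,\ZZ)$, with $\Gamma(2)$, which has index $6$ and is characterised instead by preserving the $3$-colouring of Farey vertices. The paper does not argue this last step either and defers to an external reference, so you are in the same position, but the ``bipartition'' remark as written does not pin down $\Gamma(2)$.
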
 
 
Here $\Gamma(2)$ denotes the congruence subgroup mod $2$ of  
$\mathrm{PSL}(2,\ZZ)$.
When $p=1$ those surfaces
are classically called \emph{cycloidal} 
\cite{Girondo,Millington,Petersson}. Surfaces 
$\HH^2/\Gamma$ with $\Gamma < \Gamma(2)$
 satisfy an extremal property:  
 there is a family of punctured horodiscs 
(i.e.~punctured discs 
in $\XX_0$ bounded by a horocycle), one for each 
cusp, whose interiors are embedded and pairwise disjoint, and whose 
complements are regions bounded by three horocyclic segments with tangent 
endpoints. 
In the cycloidal case ($p=1$) there is precisely a unique such a disc, which is 
maximal.
See 
\cite{Girondo} for extremality properties of embedded discs, punctured or not, 
as well as  \cite{Bavard}.

In Corollary~\ref{cor:length} we prove that $\min S=3(2 g-2+p)\log(3)$.

 \medskip
 
We shall consider a slightly more general situation, by 
replacing the cusp by a 
\emph{cone point} of angle $\alpha\in (0,2\pi)$ or a \emph{geodesic} of length 
$r>0$. Denote 
by $\XX$ this space, and denote by $\cc$ the cone point, the cusp, or the 
boundary component, 
according to the case we are considering.
Consider again $\P$ the space of polygons in $\XX$ with fixed angles
$0<\beta_1,\ldots,\beta_n<\pi$ that separate $\cc$ from the (infinite 
volume) end  of $\XX$ and so that 
$\cc$  lies in the convex side of each angle.
If $\cc$ is  a cone point of angle $\alpha$, then  we need to assume 
furthermore that 
\begin{equation}
 \label{eqn:hypothesis}
\alpha+\sum_{i=1}^n\beta_i < n\pi\, , 
\end{equation}
so that $\P\neq \emptyset$ (see Lemma~\ref{Lemma:existence}).

\begin{Definition}
\label{definition:equidistant}
An \emph{equidistant} to $\cc$ is the following curve in $\XX$: 
\begin{itemize}
 \item a horocycle centered at $\cc$ when it is a cusp,
 \item a circle centered at $\cc$ when it is a cone point, or
 \item a equidistant line to $\cc$ when it is a geodesic.
\end{itemize}
\end{Definition}

An equidistant has constant geodesic curvature $\kappa$,  where $\kappa =1$,
$\kappa>1$ or $\kappa<1$ 
in the respective cases of the definition. The following generalizes 
Theorem~\ref{theoremc}.

\begin{Theorem}
\label{theoremMain}
The unique minimum of the perimeter in $\mathcal P$ is realized by the polygon 
with an inscribed
equidistant to $\cc$.
\end{Theorem}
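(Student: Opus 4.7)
The approach is variational: parametrize $\P$, exhibit the perimeter $L$ as a smooth coercive function on $\P$, and identify its critical points with the equidistant-inscribed polygons (of which there is exactly one). For $P\in\P$ with sides $\ell_1,\ldots,\ell_n$, let $d_i=\dist(\cc,\ell_i)$ and let $f_i\in\ell_i$ be the foot of the perpendicular from $\cc$ to $\ell_i$. Each vertex $v_i=\ell_i\cap\ell_{i+1}$ sits in two right triangles $\cc f_i v_i$ and $\cc f_{i+1}v_i$ with legs of length $d_i$ and $d_{i+1}$ respectively, and whose angles at $v_i$ add up to $\beta_i$. Letting $\theta_i$ be the angle at $\cc$ between the rays $\cc f_i$ and $\cc f_{i+1}$ (suitably reinterpreted in the cusp/geodesic cases as horocyclic or equidistant displacement), the data $(d_1,\ldots,d_n,\theta_1,\ldots,\theta_n)$ determine $P$ up to isometry, and hyperbolic right-triangle trigonometry expresses the side-length contributions and the angle condition at each $v_i$ as smooth relations in these parameters.

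\textbf{Critical points.} The global constraint at $\cc$ takes one of three forms: $\sum_i\theta_i=\alpha$ in the cone case, horocyclic displacement closes up in the cusp case, and equidistant displacement along $\cc$ closes up in the geodesic case. I would use these as Lagrange conditions and differentiate
\begin{equation*}
L=\sum_{i=1}^n \bigl(|v_{i-1}f_i|+|f_iv_i|\bigr)
\end{equation*}
with respect to the $d_i$ and $\theta_i$. The expected outcome is that the first-order conditions force $d_1=d_2=\cdots=d_n$, which means the sides are tangent to a common equidistant of $\cc$; thus critical points coincide with equidistant-inscribed polygons. Existence and uniqueness of an equidistant-inscribed polygon in $\P$ with the prescribed angles follows from monotonicity of a one-parameter family indexed by the common $d$, and the hypothesis \eqref{eqn:hypothesis} is precisely what makes this family reach the prescribed data in the cone case.

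\textbf{Minimum and main obstacle.} Coercivity of $L$ on $\P$---a perimeter blow-up as sides degenerate towards $\cc$ or escape to the infinite end of $\XX$---then ensures that the (unique) critical point is the global minimum. The principal difficulty is the critical-point calculation: one needs a clean, uniform derivation across the three cases ($\kappa>1$, $\kappa=1$, $\kappa<1$). Writing the trigonometric identities via a single curvature parameter $\kappa$ and $\kappa$-analogues of $\sinh$ and $\cosh$ is the natural unification, but care is needed to check that the cross-terms in $\partial L/\partial \theta_i$ and $\partial L/\partial d_i$ cancel in pairs around the polygon via the angle-closure identities. A secondary point is verifying coercivity at the infinite end in the geodesic and cone cases, which should follow from the fact that a polygon whose sides drift far from $\cc$ must develop a long side in order to separate $\cc$ from the infinite end.
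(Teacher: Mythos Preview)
Your variational skeleton---properness of the perimeter on $\overline{\P}$, identification of interior critical points with equidistant-inscribed polygons, and existence/uniqueness of the latter---is exactly the paper's four-lemma structure. The genuine difference is in how you parametrize $\P$ and carry out the critical-point computation: you propose the classical coordinates $(d_1,\ldots,d_n,\theta_1,\ldots,\theta_n)$ and hyperbolic right-triangle trigonometry, whereas the paper works in the Lorentz model, encodes each oriented side as a unit vector $e_i\in\SS^2_1$, and reduces the critical-point condition to a rank statement for a $4\times(n+2)$ matrix (the kernel of its transpose then forces $e_i\cdot\xx$ to be constant in $i$). The Lorentzian route makes the three cases literally the same computation---the causal type of $\xx$ never enters---while your approach needs the $\kappa$-trigonometry unification you mention and the pairwise cancellation of cross-terms around the polygon. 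Both are viable, but the paper's is shorter and sidesteps the very step you flag as the principal difficulty.

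Two concrete gaps remain in your plan. First, you do not treat the boundary $\overline{\P}\setminus\P$: polygons with some $\ell_i=0$, or (in the cone case) with $\cc$ lying on a side or at a vertex. The paper devotes a separate lemma to showing that every such boundary configuration can be deformed into $\P$ while strictly \emph{decreasing} the perimeter; without this you cannot exclude a boundary minimum, and your $(d_i,\theta_i)$ coordinates degenerate there. Second, your coercivity claim towards $\cc$ is incorrect in the cusp case: the perimeter does \emph{not} blow up as the polygon moves into the cusp. Rather, such polygons cannot exist once the ambient horodisc has area smaller than the area enclosed by the polygon, which is fixed by Gauss--Bonnet in terms of $\beta_1,\ldots,\beta_n$; so $\P$ is simply bounded in that direction. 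You will need this area argument, not a perimeter blow-up, to finish the properness proof.
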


In $\HH^2$ a polygon is determined by the angles and edge 
lengths. In Lemma~\ref{Lemma:uniq} we prove that this is true also for 
polygons in $\P$, in 
particular the position with respect to $\cc$ is also determined by the angles 
and edge lengths.

The proof of Theorem~\ref{theoremc} uses techniques from 
\cite{Porti}, that relies on ideas introduced in \cite{Schlenker}, with 
some modifications.
The proof requires the Lorentz model of hyperbolic space so that several 
aspects of 
the three cases are unified. For instance  
$\cc$ is represented by a point $\xx$ in Lorentz space, which is lightlike for 
a 
cusp,
timelike for a cone point, and spacelike for a geodesic.

 Section~\ref{section:lorentz} is devoted to the  tools of Lorentz spaces we 
need. In Section~\ref{sec:polys} we construct the space of polygons $\P$ and we 
prove that it is a $(n-1)$-dimensional manifold. The main theorem is proved in 
Section~\ref{sec:main} and the corollary on spines is proved in 
Section~\ref{sec:spines}.

\paragraph{Acknowledgement} I am indebted to Christophe Bavard for pointing me 
to
the reference \cite{Gendulphe}

\section{Lorentz Space}
\label{section:lorentz}

The Lorentz space $\RR^2_1$ is $\RR^3$ equipped with the symmetric bilinear 
product with 
matrix
$$
J=\begin{pmatrix}
   -1 & 0 & 0 \\
   0 & 1 & 0 \\
   0 & 0 & 1
  \end{pmatrix}
$$
so that for $x,y\in\RR^2_1$, 
$
x\cdot y= x^t J y= -x^0 y^0 + x^1 y^1+x^2 y^2
$.
The Lorentz model of the hyperbolic plane is then
$$
\HH^2=\{ x\in\RR^2_1 \mid x\cdot x= -1,\ x^0> 0\} \, .
$$
From the equation $x\cdot x= -1$, the tangent space at a point is its 
orthogonal
$$
T_x\HH^2 = x^\perp = \{ y\in\RR^2_1 \mid x\cdot y= 0  \}  \,   .
$$
The de Sitter sphere is 
$$
\SS^2_1 =\{ x\in\RR^2_1 \mid x\cdot x= 1 \} \, .
$$ 
Every point $x\in \SS^2_1$ can be identified with an \emph{oriented line} in 
$\HH^2$
$$
\{y\in \HH^2\mid x\cdot y=0  \}\, .
$$
The orientation is provided by a normal vector.
Indeed, given $x\in \SS^2_1$, for  any point $p$ in 
the line $x$, $x$ can be viewed as a vector
in $T_p\HH^2$ (since $x\cdot p=0$) and $x$ is orthogonal to the line it 
represents.
We can also associate to $x$ a halfplane bounded by this line
$$
\{y\in \HH^2\mid x\cdot y\leq 0  \}\, .
$$

\begin{Remark}
 The vector $x\in\SS^2_1$ is the outwards normal field at the 
 boundary of the halfplane 
$
\{y\in \HH^2\mid x\cdot y\leq 0  \}\, .
$
\end{Remark}

To prove this remark, given a point $y\in \HH^2$ such that $x\cdot y=0$,  we 
consider the path $t\mapsto \varsigma(t) = y+t\, x+ O(t^2)$,
then $\varsigma(t)\cdot x= t+ O(t^2)$. Hence $\varsigma'(0)=x$ and the 
derivative of $\varsigma(t)\cdot x$ at $t=0$ is positive.


The light half-cone is
$$
\LL= \{ x\in\RR^2_1 \mid x\cdot x= 0, x^0>0  \}\, .
$$
Every $x\in \LL$ can be identified with the horocycle
$$
\{y\in\HH^2 \mid y\cdot x= - 1\}\, . 
$$
This is the boundary of the horodisc
$$
\{y\in\HH^2 \mid y\cdot x\geq  - 1\}\, . 
$$
On the other hand, the projective space on $\LL$ can be identified to the 
ideal boundary $\partial_\infty\HH^2$.

With the previous conventions, 
the 
Lorentz
product is related to the incidence, see 
\cite[Section~3.2]{Ratcliffe}:

\begin{Proposition}[Incidence and Lorentz product]
 \label{Proposition:incidence}
 \begin{enumerate}[(a)]
 \item Given two points $x,y\in \HH^2$ at distance $d\geq 0$, then
 $x\cdot y= -\cosh d$.
 
 \item Given a point $x\in \HH^2$ and an \emph{oriented line}  $y\in \SS^2_1$ 
 at   distance $d \geq 0 $, then
 $x\cdot y=  \pm \sinh d$, where the sign is negative if and only if $y$ 
belongs to the 
halfplane
 associated to $x$.

 \item The horocycle $x \in \LL$ is centered at an ideal endpoint of a 
line $y\in\SS^2_1$ if and only if $x\cdot y=0$.

 \item The horocycle $x \in \LL$ is tangent to the line $y\in \SS^2_1$ if and 
only if $x\cdot y=\pm 1$, with negative sign if the halfplane corresponding to 
$y$ contains the horodisc corresponding to $y$.
 
 \item If the oriented lines $x, y\in \SS^2_1$ are disjoint at distance 
 $d\geq 0$ ($d=0$ means that they are asymptotic), then 
 $
 x\cdot y = \pm \cosh d
 $, 
 where the sign 
 is positive when  the orientations are compatible (one of the 
halfplanes is contained  in the other).

 \item If the oriented lines $x, y\in \SS^2_1$ meet at one point with
 angle $\alpha$ (taking care of the orientations), then
 $
 x\cdot y = \cos \alpha
 $.
\end{enumerate}
\end{Proposition}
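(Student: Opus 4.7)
The common strategy is to exploit the invariance of the Lorentz product under the isometry group $O^+(2,1)$, which acts transitively on each type of configuration appearing in (a)--(f). For each item, the plan is to reduce to a canonical configuration by an ambient isometry and then verify the formula by direct computation in that model.

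For (a), after placing $x=(1,0,0)$ I can, by a further rotation around the $x^0$-axis, write any $y\in\HH^2$ at hyperbolic distance $d$ as $y=(\cosh d,\sinh d,0)$; this uses the standard fact that $t\mapsto(\cosh t,\sinh t,0)$ is a unit-speed geodesic for the induced metric on $\HH^2$. Then $x\cdot y=-\cosh d$ follows at once. For (b), I normalize the line to $\{x^1=0\}\cap\HH^2$ with $y=(0,1,0)$, so that its halfplane is $\{x^1\le 0\}$; the points at distance $d$ then have the form $(\cosh d,\pm\sinh d,0)$, and $x\cdot y=\pm\sinh d$ with the sign determined by containment in the halfplane. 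For (f), I place the intersection at $(1,0,0)$ and the oriented lines as $y_i=(0,\sin\theta_i,-\cos\theta_i)$, whose oriented angle is $\alpha=\theta_1-\theta_2$, giving $y_1\cdot y_2=\cos\alpha$.

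For the horocycle items, (c) is immediate: the ideal endpoints of $y\in\SS^2_1$ correspond to the two isotropic rays in the plane $y^\perp$, so an element $x\in\LL$ represents a horocycle centered at one of them iff $x\cdot y=0$. For (d), I normalize the horocycle to $x=(1,1,0)$, whose horodisc is $\{y^0-y^1\ge 1\}\cap\HH^2$; a line $y\in\SS^2_1$ tangent to this horocycle meets it at a single point, and a direct calculation in these coordinates yields $|x\cdot y|=1$, with the sign fixed by which halfplane contains the horodisc. Finally for (e), in the hyperparallel case I place the common perpendicular so that both $y_i$ lie in the $x^1 x^2$-plane and compute in that plane; the asymptotic case is then obtained as the $d\to 0$ limit.

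The main obstacle is not the numerical computation, which is routine Lorentz algebra once the normalization is in place, but keeping the signs straight in (b), (d), and (e), where the answer depends on orientations, halfplane containment, or horodisc inclusion. For each I plan to pin down the sign by testing against a single reference configuration (for instance, evaluating at $(1,0,0)$ against the standard halfplane $\{x^1\le 0\}$) and then extending by invariance.
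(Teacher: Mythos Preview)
Your proposal is correct and is exactly the standard argument one finds in the reference the paper cites. Note, however, that the paper does \emph{not} supply its own proof of this proposition: it simply records the formulas and refers to \cite[Section~3.2]{Ratcliffe}. So there is nothing to compare against beyond saying that your normalization-and-compute strategy is precisely the approach taken in Ratcliffe's book.

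One small slip worth fixing: in item (d), with $x=(1,1,0)$ the horodisc $\{p\in\HH^2 : p\cdot x\ge -1\}$ is $\{p^0-p^1\le 1\}$, not $\{p^0-p^1\ge 1\}$; the center of the horocycle is approached as $p^0-p^1\to 0^+$. This does not affect your overall argument or the sign analysis, but you should correct the inequality before carrying out the tangency computation.
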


Following again \cite[Section~3.2]{Ratcliffe}  the \emph{Lorentzian cross 
product}  
$\boxtimes$ in $\RR^2_1$  is defined by the rule
$$
(u\boxtimes v)\cdot w=\det (u,v,w),\qquad \forall u,v,w\in\RR^2_1,
$$
where $\det (u,v,w)$ denotes the determinant of the matrix with entries the 
components of $u, v,w$. Namely 
$ u\boxtimes v = J(u\otimes v) $.
In particular $(\RR^2_1,\boxtimes)$ is a Lie algebra.

\begin{Remark} There is a natural  bijection $\RR^2_1 \leftrightarrow 
\mathfrak{so}(2,1)$ that is:
\begin{itemize}
 \item an isomorphism of Lie algebras  $(\RR^2_1,\boxtimes)\cong 
(\mathfrak{so}(2,1), [,]) $,
 \item an isomorphism of $\mathrm{SO}_0(2,1)$-modules, where the action on  
 $\RR^2_1$ is linear and on $\mathfrak{so}(2,1)$ is the adjoint,  and
 \item a Lorentz isometry, where  $\mathfrak{so}(2,1)$ is equipped with a 
multiple of the 
Killing form.
\end{itemize}
\end{Remark}

Now fix $\xx \in \HH^2$, $\SS^2_1$, or  $\LL$. Namely $\xx$ represents either 
a point in hyperbolic plane,  an oriented line, or an ideal point (viewed 
projectively).
Let $e_1,\ldots,e_n\in \SS^2_1$ be a collection of oriented lines.

\begin{Lemma}
\label{Lemma:equidistant}
 The oriented lines $e_1,\ldots,e_n\in \SS^2_1$ are tangent to a equidistant to 
$\xx$ 
 if and only if
 $$
 \vert e_1\cdot\xx \vert =\cdots = \vert e_n\cdot\xx\vert = \textrm{ctnt}\, .
 $$
 In addition, the absolute values can be removed by taking care of  
orientations.
\end{Lemma}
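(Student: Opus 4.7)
The plan is to give a uniform description of all three equidistants as level sets of the Lorentz product with $\xx$. In every case, an equidistant to $\xx$ can be written as
\[
E_c = \{y \in \HH^2 : y\cdot\xx = c\}
\]
for a constant $c$ with $c^2 + \xx\cdot\xx > 0$; indeed Proposition~\ref{Proposition:incidence}(a)--(d) identifies $E_c$ with a circle when $\xx\in\HH^2$, a horocycle when $\xx\in\LL$, and a hypercycle when $\xx\in\SS^2_1$. The lemma thus reduces to showing that an oriented line $e\in\SS^2_1$ is tangent to $E_c$ if and only if $(e\cdot\xx)^2 = c^2 + \xx\cdot\xx$.

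For the forward direction, I would compute both relevant tangent lines at a point $y$. The affine slice $E_c = \HH^2\cap\{z:z\cdot\xx=c\}$ has tangent direction $y^\perp\cap\xx^\perp\subset T_y\HH^2$ at $y$, while the geodesic $\ell_e = \HH^2\cap\{z:z\cdot e=0\}$ has tangent direction $y^\perp\cap e^\perp$. These one-dimensional subspaces agree if and only if $e\in\mathrm{span}(y,\xx)$. Writing $e = \alpha y + \beta\xx$ and imposing $e\cdot y = 0$ and $e\cdot e = 1$ yields $\alpha = \beta c$ and $\beta^2(c^2+\xx\cdot\xx)=1$. A direct computation then gives $e\cdot\xx = \beta(c^2+\xx\cdot\xx)$, hence
\[
(e\cdot\xx)^2 = c^2 + \xx\cdot\xx,
\]
a quantity independent of the individual tangent line $e$; so every line tangent to $E_c$ shares the common value $|e\cdot\xx| = \sqrt{c^2+\xx\cdot\xx}$.

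For the converse, starting from a common value $|e_i\cdot\xx| = k$ I would solve $c^2 = k^2 - \xx\cdot\xx$ for $c$ and check, by reversing the computation, that each $e_i$ is tangent to the resulting $E_c$ at the unique point of $\ell_{e_i}\cap E_c$. The sign refinement is then immediate: reversing $e_i \to -e_i$ negates $e_i\cdot\xx$ without altering $|e_i\cdot\xx|$, so one may orient every $e_i$ so that its halfplane contains $\xx$, which by Proposition~\ref{Proposition:incidence}(b),(d),(e) corresponds to the negative sign, making all the products $e_i\cdot\xx$ simultaneously equal to $-\sqrt{c^2+\xx\cdot\xx}$. The main obstacle is really only the bookkeeping across the three cases, and in particular the check that $c^2 = k^2 - \xx\cdot\xx$ is solvable when $\xx\in\SS^2_1$, which requires $k\geq 1$; this is automatic because a line tangent to a hypercycle of $\xx$ does not cross $\xx$, so by Proposition~\ref{Proposition:incidence}(e) the product $|e\cdot\xx|\geq 1$ in the first place.
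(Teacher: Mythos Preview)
The paper does not actually supply a proof of this lemma; it is stated as an immediate consequence of the incidence relations in Proposition~\ref{Proposition:incidence}. Your uniform treatment via the level sets $E_c=\{y\in\HH^2: y\cdot\xx=c\}$ is correct and is essentially the intended argument: by Proposition~\ref{Proposition:incidence}(a),(b),(d), the quantity $|e\cdot\xx|$ encodes precisely the distance from $\xx$ to the line $e$ (or the analogous horocyclic/hypercyclic datum), so equality of these numbers is exactly tangency to a common equidistant. Your explicit computation that tangency at $y$ forces $e\in\operatorname{span}(y,\xx)$ and hence $(e\cdot\xx)^2=c^2+\xx\cdot\xx$ is a clean way to make this uniform.

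There is one genuine wrinkle in your converse when $\xx\in\SS^2_1$. Your justification that $k\geq 1$ is circular as written: you argue that a line tangent to a hypercycle satisfies $|e\cdot\xx|\geq 1$, but tangency is precisely what you are trying to establish. In fact the lemma as literally stated is slightly imprecise here: lines $e_i$ that all cross $\xx$ can share a common value $|e_i\cdot\xx|=k<1$ yet be tangent to no equidistant at all, and lines with $|e_i\cdot\xx|=k>1$ lying on opposite sides of $\xx$ are tangent to $E_c$ and $E_{-c}$ respectively rather than to a single equidistant. Both issues disappear once one uses the signed product $e_i\cdot\xx$ with a consistent orientation convention (the ``in addition'' clause of the lemma), and in the paper's application the $e_i$ are edges of a convex polygon with $\cc$ on the convex side, so they do not cross $\xx$ and all lie on the same side. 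You should say this explicitly rather than invoke the conclusion you are proving.
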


\section{The space of polygons}
\label{sec:polys}

Let $\P$ denote the space of polygons in $\XX$ as in the introduction.
It can be embedded in $T^1\XX\times \RR^n$  by looking at the tangent vector to 
a given edge at one of its vertices,  and the edge lengths $l_1,\ldots,l_n>0$.

By convexity, the closure $\overline\P$ is obtained by considering edges of 
length zero or, when $\cc$ is a cone point, by allowing a vertex or the 
interior 
of an edge to meet the cone point. In this case, $\alpha>\pi$ when $\cc$ meets  
the interior 
of an edge, or
  $\alpha+\beta_i> 2\pi$ 
when $\cc$ meets the $i$-th vertex.

As before, $\xx\in \HH^2$ when $\cc$ is a cone point, $\xx\in \SS^2_1$ when 
$\cc$ is geodesic, and  $\xx\in \LL$ when $\cc$ is a cusp.

Fix $e_0$ an oriented  line so that $e_0\cdot\xx=0$
and fix a point  $p_0\in e_0$ in this line. Let $g\colon (-\infty,\infty)\to 
\HH^2$ denote a  parametrization of $e_0$ so that $g(0)=p_0$ and
  $\{e_0, \dot{g}(0)\}$ is a positive frame in $T_{p_0} \HH^2$. In addition, 
assume:
  \begin{itemize}
   \item When $\xx\in\HH^2$, then $\xx=p_0$.
   \item When $\xx\in\SS^2_1$, then $\xx\cap e_0= \{p_0 \}$ and $\{e_0,\xx\}$ 
is a 
    positive frame (i.e.~$\dot{g}(0)=\xx $).
   \item When $\xx\in \LL$, then $\xx\cdot p_0=1$ and $g(-\infty)$ is the
   projective class of $\xx$.
  \end{itemize}
%
%
%
%
%
Consider also an orientation preserving isometry $\gamma\in \mathrm{SO}_0(2,1)$ 
as follows:
 \begin{itemize}
  \item When $\xx\in\HH^2$, $\gamma$ is a (positively oriented) rotation of angle $\alpha\in (0,2\pi)$ around 
$\xx$.
  \item When $\xx\in\SS^2_1$, $\gamma$ is a loxodromic isometry with axis
$\xx$ of translation length $r$  (in the direction $-e_0$).
  \item When $\xx\in\LL$, $\gamma$ is a parabolic transformation than 
         fixes $\xx$ (in the direction $-e_0$).
 \end{itemize}
 Chose 
$l_1,\ldots,l_n\in [0,+\infty)$ the lengths of the 
sides of the polygon. We shall also consider two parameters $l_0\in\RR$ and 
$\theta \in [0,2\pi]/\{0\sim2\pi\} \cong S^1$. 

We define maps $v$ and $w$ from the parameter spaces
 to the 
unit tangent bundle $T^1\HH^2$ as follows. Start with the vector 
$\dot{g}(l_0)\in T _{g(l_0)}\HH^2$ and rotate it by an angle $\theta$, call 
this vector $v(l_0,\theta)$. This defines a map:
$$
v \colon \RR\times S^1\to T^1\HH^2 \, .
$$
Then consider a polygonal path starting at  $q_0=g(l_0)$ in the direction of 
$v(l_0,\theta)$ that is the union of $n$ segments of lengths 
$l_1,l_2,\ldots,l_n$ with ordered angles $\beta_1,\beta_2,\ldots,\beta_{n-1}$,
so that at the end of the $i$-th edge turns left by the exterior angle 
$\pi-\beta_i$ and continue to the $(i+1)$-th edge.
At the end of the $n$-th edge, consider the tangent unitary 
vector defining an angle $\beta_n$, i.e.\ turn left by the exterior angle 
$\pi-\beta_n$.
This defines a map
$$
w\colon \RR\times S^1\times \RR^n\to T^1\HH^2.
$$
Then 
$\overline \P$ is contained in the set 
$$
\overline \P\subseteq
\{
(l_0,\theta,l_1,\ldots,l_n)\in I\times S^1\times (0,+\infty)^n\mid 
w(l_0,\theta,l_1,\ldots,l_n)= \gamma v(l_0,\theta)
\}
$$
where 
\begin{itemize}
 \item $I=[0,+\infty)$ when $\cc$ is a cone point,
 \item $I=(0,+\infty)$ when $\cc$ is a geodesic, and
 \item $I=\RR$ when $\cc$ is a cusp.
\end{itemize}
When $\cc$ is a cone point of angle $\alpha<\pi$ or $\cc$ is a geodesic, then 
$l_0=0$ is not possible by convexity.

\begin{Proposition}
\label{Proposition:tangent}
 The space $\P$ is a $(n-1)$-dimensional analytic manifold with tangent space 
 at a point $p\in\P$:
 $$
 T_p\P=\left\{ (\dot{l}_0, \dot{\theta}, \dot{l}_1,\ldots,  \dot{l}_n  ) 
 \in\RR^{n+2} \mid
  \dot{l}_0 (1-\gamma) e_0+\dot{\theta} (1-\gamma)  q_0+\sum_{i=1}^n  \dot{l}_i 
  e_i = 0 
 \right\}\, .
 $$
\end{Proposition}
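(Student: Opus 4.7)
The plan is to realize $\P$ as the zero locus of an analytic submersion and apply the implicit function theorem. Identifying $T^1\HH^2$ with $G=\mathrm{SO}_0(2,1)$ via the orbit of $v_{\mathrm{ref}}=\dot g(0)$, the maps $v$ and $w$ become $G$-valued on the parameter space $I\times S^1\times (0,+\infty)^n$, and the closing condition $w=\gamma v$ is the vanishing of the single analytic map $F=(\gamma v)^{-1}w$ into $G\cong \RR^3$. Once $dF$ is shown to have constant rank $3$ on $\P$, the analytic implicit function theorem immediately produces $\P$ as an analytic submanifold of dimension $(n+2)-3=n-1$ with $T_p\P=\ker dF$.

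To compute $dF$ in the right-invariant trivialization I use the identification $\mathfrak{so}(2,1)\cong\RR^2_1$ from the excerpt: a spacelike unit $e\in\SS^2_1$ is the generator of translation along the line $e$, and a timelike unit $q\in\HH^2$ that of rotation around $q$. Writing $v=\exp(l_0\,e_0)\exp(\theta\,p_0)$, one computes $\dot v\,v^{-1}=\dot l_0\,e_0+\dot\theta\,q_0$, with $q_0=g(l_0)$ appearing in place of $p_0$ because $\mathrm{Ad}_{\exp(l_0 e_0)}(p_0)=q_0$. Writing $w$ as $v$ followed by the body-frame chain of translations $\exp(l_i e_0)$ and rotations $\exp((\pi-\beta_i)p_0)$ and conjugating each translation generator through the preceding factors produces exactly the oriented line $e_i$ of the $i$-th edge in the world frame; hence $\dot w\,w^{-1}=\dot l_0\,e_0+\dot\theta\,q_0+\sum_{i=1}^n \dot l_i\,e_i$. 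Differentiating the identity $w=\gamma v$ gives $\dot w\,w^{-1}=\mathrm{Ad}_\gamma(\dot v\,v^{-1})$, and rearranging produces exactly the linear equation in the statement.

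It then remains to show that $dF$ is surjective, equivalently that $(1-\gamma)e_0,(1-\gamma)q_0,e_1,\ldots,e_n$ span $\RR^2_1$. Since $\mathrm{Ad}_\gamma$ is a Lorentz isometry with fixed line $\RR\xx$, the image of $(1-\gamma)$ is the $2$-plane $\xx^\perp$, so the first two vectors lie in $\xx^\perp$. The crux of the surjectivity argument---the main obstacle---is that at every interior point of $\P$ one has $\xx\notin\mathrm{span}(e_0,q_0)$: the relation $\xx=ae_0+bq_0$ together with $\xx\cdot e_0=0$ forces $a=0$ and then $\xx=bq_0$, which is ruled out by causal type in the geodesic and cusp cases, and by $l_0>0$ (so $q_0\neq p_0=\xx$) in the cone case. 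Consequently $(1-\gamma)$ is injective on $\mathrm{span}(e_0,q_0)$ and $(1-\gamma)e_0,(1-\gamma)q_0$ already span $\xx^\perp$. To cover the remaining direction $\RR\xx$ one needs some $e_k$ with $e_k\cdot\xx\neq 0$; this is forced by the defining properties of $\P$, since otherwise every edge would pass through the cone point, be perpendicular to the boundary geodesic, or be asymptotic to the cusp, and two such consecutive edges cannot meet at a proper vertex with angle $\beta_i\in(0,\pi)$. The analytic implicit function theorem then concludes the proof.
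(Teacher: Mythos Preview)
Your proof is correct and follows essentially the same strategy as the paper's: realize $\P$ as a level set of the closing map, compute the differential in the right-invariant trivialization (the paper records this as a separate lemma), show $\langle (1-\gamma)e_0,(1-\gamma)q_0\rangle=\xx^\perp$ (this is the paper's Lemma~\ref{Lemma:image1-gamma}, which it proves by an explicit coordinate computation rather than your causal-type argument), and conclude rank~$3$ because some $e_i\cdot\xx\neq 0$. One small difference worth noting: the paper also treats the degenerate configuration $l_0=0$ in the cone case, where $q_0=\xx$ and $(1-\gamma)q_0=0$, arguing that then the $e_i$ themselves span $\RR^2_1$; this is not needed for the proposition as stated (since $l_0>0$ on $\P$), but the paper uses it immediately afterwards to extend the smooth structure to $\overline\P$.
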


The unit tangent bundle $T^1\HH^2$ is naturally identified to the isometry 
group  $\mathrm{SO}_0(2,1)$, as the action is simply transitive. Thus the 
tangent space at a given point is naturally identified with $ 
\mathfrak{so}(2,1)\cong \RR^2_1$.
In the next lemma 
the Lie algebras correspond to the tangent space at different points.

\begin{Lemma} The tangent map  $w_*\colon \RR^{n+2}\to 
T_{w(l_0,\theta,l_1,\ldots,l_n)} 
\HH^2 \cong \mathfrak{so}(2,1)$ satisfies 
 $ w_* \left(\frac{\partial\phantom{l_i} }{\partial l_i}\right)= e_i$ for 
$i=0,\ldots,n$
 and $w_* 
\left(\frac{\partial\phantom{\theta} }{\partial\theta}\right)= 
g(l_0)=q_0$.

 The tangent map $v_*\colon \RR^{2}\to  
 T_{w(l_0,\theta)} 
\HH^2 \cong
 \mathfrak{so}(2,1)$
satisfies
 $ v_* \left(\frac{\partial\phantom{l_0} }{\partial l_0}\right)= e_0$
 and $v_* 
\left(\frac{\partial\phantom{\theta} }{\partial\theta}\right)= 
g(l_0)=q_0$.
\end{Lemma}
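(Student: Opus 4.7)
The plan is to recognize each partial derivative of $v$ or $w$ as the infinitesimal effect of a one-parameter subgroup of $\mathrm{SO}_0(2,1)$ acting by left multiplication on the terminal frame, and then to read off the generator via the standard dictionary supplied by the Remark preceding the lemma: $e\in\SS^2_1$ generates translation along the line it represents, $p\in\HH^2$ generates rotation fixing $p$, and a lightlike vector generates a parabolic. Throughout, I identify $T_W\mathrm{SO}_0(2,1)$ with $\mathfrak{so}(2,1)\cong\RR^2_1$ via the right-trivialization $V\mapsto VW^{-1}$, so that the curve $\epsilon\mapsto \exp(\epsilon X)W$ has tangent vector $X$ at the origin.

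For $v$, an infinitesimal change in $l_0$ slides the base point along $e_0$: the translation $\exp(\epsilon e_0)$ preserves $e_0$ and intertwines the rotation by $\theta$ at $g(l_0)$ with the rotation by $\theta$ at $g(l_0+\epsilon)$, so $\exp(\epsilon e_0)\cdot v(l_0,\theta)=v(l_0+\epsilon,\theta)$, yielding $v_*(\partial/\partial l_0)=e_0$. Rotations around the common centre $q_0$ add in angle, so $\exp(\epsilon q_0)\cdot v(l_0,\theta)=v(l_0,\theta+\epsilon)$ and $v_*(\partial/\partial\theta)=q_0$.

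For $w$, the crucial observation is the equivariance of the polygon-laying recipe under the ambient isometry group: for any $g\in\mathrm{SO}_0(2,1)$, feeding the initial frame $g\cdot v(l_0,\theta)$ into the same sequence of lengths and exterior angles produces $g\cdot w(l_0,\theta,l_1,\ldots,l_n)$, since hyperbolic distances and angles are isometry invariants. Applied to the one-parameter families identified in the $v$-case, this gives $w_*(\partial/\partial l_0)=e_0$ and $w_*(\partial/\partial\theta)=q_0$. For $i\geq 1$, increasing $l_i$ by $\epsilon$ leaves the path unchanged up to the $i$-th vertex and rigidly drags the subsequent portion along the line $e_i$ by $\epsilon$, so the terminal frame is acted on by $\exp(\epsilon e_i)$; differentiating yields $w_*(\partial/\partial l_i)=e_i$. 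I do not expect any essential obstacle beyond keeping track of the trivialization convention, which is also what will make Proposition~\ref{Proposition:tangent} read, via $w_*-\gamma v_*=0$, with $\gamma$ acting on $\RR^2_1$ by its ordinary linear action.
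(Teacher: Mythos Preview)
Your proposal is correct and follows essentially the same approach as the paper: both recognize that varying a single parameter amounts to post-composing with a one-parameter subgroup of $\mathrm{SO}_0(2,1)$ whose generator is the corresponding vector in $\RR^2_1\cong\mathfrak{so}(2,1)$. Your write-up is simply more explicit about the trivialization convention and the equivariance of the polygon-laying construction, which the paper compresses into a single sentence.
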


\begin{proof}
 Increase one of the $l_j$ by keeping the other $l_k$ and $\theta$ 
 constant means composing the map (either $w$ or $v$) with an isometry with 
axis $e_i\in\RR^2_1$,
 and its derivative corresponds to   
 $e_i\in  \mathfrak{so}(2,1)  $ after the previous identifications of the 
tangent space to $T^1\HH^2$ to
 $\mathfrak{so}(2,1) \cong \RR^2_1$. The same argument applies to $\theta$. 
 \end{proof}

\begin{Lemma}
\label{Lemma:image1-gamma}
 Assuming that $l_0>0$ when $\cc$ is a cone point, we have
 $$
 \langle (1-\gamma) e_0,(1-\gamma) q_0\rangle = \xx^\perp \, ,
 $$
 where $q_0=g(l_0)$.
\end{Lemma}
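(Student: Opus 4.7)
The plan is to reduce the equality to three intermediate facts: (i) the image of $1-\gamma$ is contained in $\xx^\perp$; (ii) the kernel of $1-\gamma$ equals the line $\langle \xx \rangle$; (iii) the vectors $e_0, q_0, \xx$ are linearly independent in $\RR^2_1$. Given these, if $\lambda (1-\gamma) e_0 + \mu (1-\gamma) q_0 = 0$ then $\lambda e_0 + \mu q_0 \in \ker(1-\gamma) = \langle \xx \rangle$, so (iii) forces $\lambda = \mu = 0$. Combined with (i) and the fact that $\xx^\perp$ is $2$-dimensional, this yields the claimed equality.

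Step (i) is immediate: since $\gamma \xx = \xx$ and $\gamma$ is a Lorentz isometry, $((1-\gamma) y) \cdot \xx = y \cdot \xx - (\gamma y) \cdot (\gamma \xx) = 0$ for every $y \in \RR^2_1$. Step (ii) follows from the classification of nontrivial elements of $\mathrm{SO}_0(2,1)$: a rotation by $\alpha \in (0, 2\pi)$, a loxodromic of translation length $r > 0$, or a parabolic translation each has $1$ as a simple eigenvalue, with eigenline $\langle \xx \rangle$.

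For step (iii) I would compute the Gram matrix $G$ of $(e_0, q_0, \xx)$ from Proposition~\ref{Proposition:incidence} and the setup of Section~\ref{sec:polys}. Uniformly across the three cases, $e_0 \cdot e_0 = 1$, $q_0 \cdot q_0 = -1$, $e_0 \cdot q_0 = 0$ since $q_0 = g(l_0)$ lies on $e_0$, and $e_0 \cdot \xx = 0$ by the choice of $e_0$, so $\det G = -\xx \cdot \xx - (q_0 \cdot \xx)^2$. In the cone case $\xx \cdot \xx = -1$ and $q_0 \cdot \xx = -\cosh l_0$, giving $\det G = -\sinh^2 l_0$, nonzero under the hypothesis $l_0 > 0$. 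In the geodesic case $\xx \cdot \xx = 1$ and $q_0 \cdot \xx = \pm \sinh l_0$, giving $\det G = -\cosh^2 l_0 \ne 0$. In the cusp case $\xx \cdot \xx = 0$, and using the parametrization $g(t) = \cosh(t) p_0 + \sinh(t) \dot g(0)$ together with $\xx \cdot p_0 = 1$ and $g(-\infty) = [\xx]$ one pins $\xx$ down as $\dot g(0) - p_0$ and finds $q_0 \cdot \xx = e^{l_0}$, whence $\det G = -e^{2 l_0} \ne 0$. The main obstacle I anticipate is this cusp subcase, where the two normalization conditions on $\xx$ must be used in tandem; this is also where the necessity of the hypothesis $l_0 > 0$ in the cone case becomes transparent, since otherwise $\det G$ would vanish.
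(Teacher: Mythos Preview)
Your proof is correct and follows essentially the same strategy as the paper: establish that $e_0$, $q_0$, $\xx$ are linearly independent (the paper does this by exhibiting explicit coordinates in each of the three cases, you do it via the Gram determinant), and then use $\ker(1-\gamma)=\langle\xx\rangle$ together with the fact that $\gamma$ is a Lorentz isometry. Your step~(i), showing $\operatorname{Im}(1-\gamma)\subseteq\xx^\perp$ directly from $\gamma\xx=\xx$, makes explicit what the paper compresses into its final sentence.
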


\begin{proof}
 We start checking that, for the different possibilities of $\xx$,   
 \begin{equation}
\label{eqn:directsum}
 \langle e_0, q_0, 
\xx\rangle = \RR^2_1\, .
  \end{equation}
 Namely, when $\xx$ is a horocycle, we may assume up to isometry that
 $$
 \xx=\begin{pmatrix}
      1 \\ 1 \\ 0
     \end{pmatrix}, \quad
 e_0= \begin{pmatrix}
      0 \\ 0 \\ 1
     \end{pmatrix}, \quad
 q_0=  \begin{pmatrix}
      \cosh(t) \\ \sinh(t) \\ 0
     \end{pmatrix}, \quad   
 $$
 for some $t\in \RR$. When $\xx$ is a geodesic, 
  $$
 \xx=\begin{pmatrix}
      0 \\ 1 \\ 0
     \end{pmatrix}, \quad
 e_0= \begin{pmatrix}
      0 \\ 0 \\ 1
     \end{pmatrix}, \quad
 q_0=  \begin{pmatrix}
      \cosh(t) \\ \sinh(t) \\ 0
     \end{pmatrix}, \quad   
 $$
 for some $t>0$. When $\xx$ is a point in hyperbolic plane,
 since we assume $l_0>0$, 
   $$
 \xx=\begin{pmatrix}
      1 \\ 0 \\ 0
     \end{pmatrix}, \quad
 e_0= \begin{pmatrix}
      0 \\ 0 \\ 1
     \end{pmatrix}, \quad
 q_0=  \begin{pmatrix}
      \cosh(t) \\ \sinh(t) \\ 0
     \end{pmatrix}, \quad   
 $$
 for some $t>0$. This establishes~\eqref{eqn:directsum}. 
Then, since $\ker(1-\gamma)=\langle\xx\rangle $ and $ \gamma$ 
is an isometry,
the lemma follows.
\end{proof}

\begin{proof}[Proof of Proposition~\ref{Proposition:tangent}]
 Consider $M$ the  matrix of size $3\times (n+2)$ with columns 
 $M_1,\ldots, M_{n+2}$, where 
\begin{equation}
 \label{eqn:MatrixM}
  M_1 =(1-\gamma) e_0, \quad  M_2= (1-\gamma)  q_0, \quad M_3= e_1, 
  \ldots, \quad   
  M_{n+2}=e_n \, .
\end{equation}
 We aim to show that $\operatorname{rank}(M)=3$, so that the maps $w$ and 
$\gamma v$ 
are transversal.
 Assume first that in the elliptic case $l_0>0$. By 
Lemma~\ref{Lemma:image1-gamma},  
 it suffices to have that $e_i\cdot \xx\neq 0$ for some $i= 1,\ldots,n $. By 
the incidence relations, Proposition~\ref{Proposition:incidence}, it is 
impossible that  $e_i\cdot \xx = 0$ for all 
$i=1,\ldots,n$ (e.g.\ when there is a cusp this would mean that all edges 
belong to a geodesic ending at the cusp, and similarly for the other cases). 
 
When $l_0=0$ in the elliptic case, $q_0=\xx$ hence $(1-\gamma) q_0=0$. In this
case, since  $\xx=\gamma \xx$ is the starting  and final point on the 
polygonal path, it is a closed polygon in $\HH^2$. In particular the number of 
edges is $\geq 3$ and they are generic enough so that $e_1,\ldots,e_n$ are 
linearly independent in $\RR^2_1$.
\end{proof}

\begin{Remark}
 \label{Remark:outofP}
 The proof of Proposition~\ref{Proposition:tangent} yields that  
$\overline \P$ is contained in a smooth manifold of the same dimension as 
$\P$, with the tangent space described by 
Proposition~\ref{Proposition:tangent}. We shall use this to integrate tangent  
vectors into deformations of polygons.
\end{Remark}

\section{Proof of the main theorem}
\label{sec:main}

The proof of Theorem~\ref{theoremMain} follows from the following 4 lemmas.

\begin{Lemma}
 \label{Lemma:properness}
 The perimeter $\overline{\P}\to [0,+\infty)$ is a proper function. 
\end{Lemma}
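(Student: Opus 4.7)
The plan is to show that if $(p_k)\subset\overline{\P}$ is any sequence with $\sum_{i=1}^n l_i^{(k)}\leq L_0$, then a subsequence converges in $\overline{\P}$. In the parametrization $(l_0,\theta,l_1,\ldots,l_n)\in I\times S^1\times[0,+\infty)^n$ of Section~\ref{sec:polys}, the perimeter bound puts each $l_i^{(k)}$ into $[0,L_0]$, and $\theta^{(k)}\in S^1$ is already in a compact space, so everything reduces to confining the position parameter $l_0^{(k)}\in I$ to a compact subinterval.

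The first step is to apply Gauss--Bonnet to the region $R_k\subset\XX$ bounded by $p_k$ on the $\cc$-side. In all three cases (cusp, cone point, geodesic) the topology of $R_k$ together with the angle data yields a fixed area
\[
\operatorname{Area}(R_k)=A_0:=n\pi-\sum_{i=1}^n\beta_i-\alpha_0,
\]
with $\alpha_0=\alpha$ in the cone case and $\alpha_0=0$ otherwise; positivity of $A_0$ is \eqref{eqn:hypothesis} in the cone case and automatic in the others. In particular $p_k$ cannot retreat into $\cc$: if $l_0^{(k)}$ heads to the $\cc$-end of $I$, then some vertex of $p_k$ must still sit within a uniformly controlled distance of $\cc$ (otherwise $R_k$ would be too thin to have area $A_0$), so the chain from $q_0^{(k)}$ to that vertex has growing length, hence the perimeter grows.

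In the opposite direction, $l_0^{(k)}$ also cannot escape toward the funnel end of $I$. The lift of $p_k$ to $\HH^2$ is a polygonal path from $q_0^{(k)}=g(l_0^{(k)})$ to $\gamma q_0^{(k)}$, whose length $\sum l_i^{(k)}$ is at least the hyperbolic distance $\dist(q_0^{(k)},\gamma q_0^{(k)})$. This displacement blows up as $q_0^{(k)}$ leaves every bounded neighbourhood of $\xx$ (a standard property of parabolic, loxodromic and elliptic isometries, respectively), so the perimeter bound caps $l_0^{(k)}$ above in $I$. With $(l_0,\theta,l_1,\ldots,l_n)$ thus confined to a compact set, Proposition~\ref{Proposition:tangent} together with Remark~\ref{Remark:outofP} place $\overline{\P}$ as a closed subset of the smooth solution manifold of $w=\gamma v$, so any convergent subsequence has its limit in $\overline{\P}$. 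I expect the main obstacle to be unifying the three geometric cases for $\cc$; my plan is to handle them simultaneously through the Lorentz pairing $x\cdot\xx$ and the incidence formulas of Proposition~\ref{Proposition:incidence}, which turn ``distance to $\cc$'' into $|x\cdot\xx|$ via Lemma~\ref{Lemma:equidistant}.
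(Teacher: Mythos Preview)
Your approach is the paper's: bound $l_0$ from above via the displacement $\dist(q_0,\gamma q_0)\to\infty$, and handle the remaining (cusp) direction with Gauss--Bonnet. The structure and the key ingredients are right.

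However, your Gauss--Bonnet step is stated backwards. You write that ``some vertex of $p_k$ must still sit within a uniformly controlled distance of $\cc$ (otherwise $R_k$ would be too thin)''. If all vertices were \emph{far} from $\cc$ the region $R_k$ would be \emph{large}, not thin; what you need is that some vertex stays \emph{outside} a fixed horodisc around $\cc$. More to the point, the paper runs the argument in one stroke and avoids your detour through ``the chain to that vertex has growing length'': since every vertex lies within distance $L_0$ of $q_0^{(k)}$, as $l_0^{(k)}\to-\infty$ the whole polygon is trapped in horodiscs of area tending to $0$, so $R_k$ cannot carry the fixed area $A_0$. This is both simpler and cleaner than locating a distinguished far vertex.

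Finally, the area argument is only needed for the cusp. For the cone point $I=[0,\infty)$ is already closed at the $\cc$-end, so there is nothing to rule out; and in the geodesic case your mechanism does not force the perimeter to blow up as $l_0^{(k)}\to 0^+$ anyway (the ``far'' vertex is only at some fixed distance $d_0$ from $\cc$, so the chain from $q_0^{(k)}$ to it has length $\geq d_0-l_0^{(k)}$, which stays bounded). The paper simply treats $l_0\to+\infty$ for those two cases and is done.
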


\begin{Lemma}
 \label{Lemma:critical}
 A polygon in $\P$ is a critical point of the perimeter iff it has an inscribed 
equidistant.
\end{Lemma}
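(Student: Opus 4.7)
The plan is to apply Lagrange multipliers to the perimeter $L = l_1 + \cdots + l_n$ on the tangent space described by Proposition~\ref{Proposition:tangent}. Since the differential is $dL = (0, 0, 1, \ldots, 1)$ and $T_p\P$ is the kernel of the $3\times(n+2)$ matrix $M$ from \eqref{eqn:MatrixM} with columns $(1-\gamma)e_0$, $(1-\gamma)q_0$, $e_1, \ldots, e_n$, a polygon $p\in\P$ is critical for $L$ iff $dL$ lies in $\operatorname{Image}(M^T)$, i.e.\ iff there exists $\lambda\in\RR^2_1$ with
\begin{align*}
(1-\gamma)e_0\cdot \lambda &= 0,\\
(1-\gamma)q_0\cdot \lambda &= 0,\\
e_i\cdot \lambda &= 1 \qquad (i=1,\dots,n).
\end{align*}

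Next I would read off the first two equations using Lemma~\ref{Lemma:image1-gamma}: together they say $\lambda$ is Lorentz-orthogonal to $\xx^\perp$, hence $\lambda\in(\xx^\perp)^\perp = \langle\xx\rangle$. Writing $\lambda = \mu \xx$ with $\mu\in\RR$, the remaining $n$ equations become
$$
e_i\cdot\xx = \tfrac{1}{\mu}\qquad (i=1,\dots,n),
$$
a common nonzero constant independent of $i$. By Lemma~\ref{Lemma:equidistant} this is exactly the condition that the oriented lines $e_1,\ldots,e_n$ are simultaneously tangent to a single equidistant to $\xx$, i.e.\ that the polygon has an inscribed equidistant to $\cc$. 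For the converse, one checks that if such an equidistant exists with common value $e_i\cdot\xx=c\neq 0$, then $\lambda:=c^{-1}\xx$ satisfies the Lagrange system: the third set of equations is immediate, and the first two follow from $\gamma\xx=\xx$, which yields $(1-\gamma)v\cdot\xx = v\cdot\xx - v\cdot\gamma^{-1}\xx = 0$ for every $v\in\RR^2_1$.

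The main obstacle is twofold. First, Lemma~\ref{Lemma:equidistant} characterizes tangency with absolute values $|e_i\cdot\xx|$ equal; to upgrade to equality of signed values I would use convexity of the polygon together with the assumption that $\cc$ lies on the convex side of every angle, so that the edges $e_i\in\SS^2_1$ are coherently oriented with respect to $\cc$ and all signs of $e_i\cdot\xx$ agree. Second, Lemma~\ref{Lemma:image1-gamma} requires $l_0>0$ in the cone-point case; in the remaining degenerate case $l_0=0$ with $\cc$ a cone point, the vector $(1-\gamma)q_0$ vanishes and the Lagrange system decouples. As noted in the proof of Proposition~\ref{Proposition:tangent}, the polygonal path is then a genuine closed polygon in $\HH^2$ based at $\xx$, and one either invokes the analogous criticality criterion from \cite{Porti} for polygons in $\HH^2$ or handles the reduced system directly (where $\lambda=\mu\xx$ is still forced because $\xx$ itself must annihilate $(1-\gamma)e_0$ and be a common normal for all edges tangent to a metric circle centered at the cone point).
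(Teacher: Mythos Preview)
Your proof is correct and is essentially the same argument as the paper's: the paper phrases criticality as $\operatorname{rank}(\overline M)=3$ for the matrix $\overline M$ obtained by appending the row $(0,0,1,\ldots,1)$ to $M$, then extracts a vector $(z^0,z^1,z^2,1)\in\ker(\overline M^{\,t})$ and converts to a Lorentz vector $z=(-z^0,z^1,z^2)$ --- which is exactly your Lagrange-multiplier vector $\lambda$. One small remark: your second ``obstacle'' is moot, since the statement concerns polygons in $\P$ (not $\overline\P$), and for those the cone point lies strictly in the interior, so $l_0>0$ automatically and Lemma~\ref{Lemma:image1-gamma} applies without caveat.
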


\begin{Lemma}
 \label{Lemma:totheinterior}
A polygon in  $\overline{\P}-\P$ can be perturbed to $\P$ while decreasing the 
perimeter. 
\end{Lemma}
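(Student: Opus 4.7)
The boundary $\overline{\P}\setminus\P$ decomposes into two types of degenerations: polygons with a collapsed edge, where $l_i=0$ for some $i$, so that two consecutive vertices merge at a single point $v$ with effective interior angle $\beta_i+\beta_{i+1}-\pi\in(0,\pi)$; and, in the cone-point case, polygons where $\cc$ meets a vertex (which requires $\alpha+\beta_i>2\pi$) or the interior of an edge (which requires $\alpha>\pi$). My plan is to construct, at each such degenerate polygon, an explicit tangent vector in the ambient smooth manifold of Remark~\ref{Remark:outofP} that points into $\P$ and decreases the perimeter strictly to first order. I handle the collapsed-edge case in detail; the cone-point cases proceed by the same linear-algebraic mechanism applied at the new incidence.

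For $l_i=0$, the three oriented lines $e_{i-1},e_i,e_{i+1}\in\SS^2_1$ all pass through the merged vertex $v\in\HH^2$ and so lie in the $2$-dimensional Euclidean subspace $v^\perp\subseteq\RR^2_1$. Placing orthonormal coordinates on $v^\perp$ in which these three unit normals sit at angles $0$, $\pi-\beta_i$ and $2\pi-\beta_i-\beta_{i+1}$ (the values dictated by the counterclockwise exterior-angle rotations at the two would-be vertices at $v$), a direct computation yields
\[
\sin\beta_{i+1}\,e_{i-1}+\sin(\beta_i+\beta_{i+1})\,e_i+\sin\beta_i\,e_{i+1}=0.
\]
Setting $\dot l_0=\dot\theta=0$, $(\dot l_{i-1},\dot l_i,\dot l_{i+1})=-(\sin\beta_{i+1},\sin(\beta_i+\beta_{i+1}),\sin\beta_i)$, and $\dot l_j=0$ for the remaining indices, gives a tangent vector satisfying the constraint of Proposition~\ref{Proposition:tangent}; by Remark~\ref{Remark:outofP} it integrates to a smooth one-parameter deformation. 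Since $\beta_i+\beta_{i+1}\in(\pi,2\pi)$ forces $\sin(\beta_i+\beta_{i+1})<0$, one has $\dot l_i>0$, so the deformation enters $\P$. The first-order perimeter variation is
\[
\dot L=-\bigl(\sin\beta_i+\sin\beta_{i+1}+\sin(\beta_i+\beta_{i+1})\bigr)=-4\sin\tfrac{\beta_i+\beta_{i+1}}{2}\cos\tfrac{\beta_i}{2}\cos\tfrac{\beta_{i+1}}{2}<0,
\]
which is strictly negative thanks to the angle constraints on $\beta_i,\beta_{i+1}$. Hence the perimeter strictly decreases along this deformation.

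For the cone-point degenerations, the same philosophy applies inside $\xx^\perp$ rather than $v^\perp$. If $\cc$ sits on a vertex, then $l_0=0$ and $q_0=\xx$, forcing the first and last edges of the polygon to pass through $\xx$, so that the three vectors $(1-\gamma)e_0,e_1,e_n$ all lie in $\xx^\perp$ (the first by Lemma~\ref{Lemma:image1-gamma}); if $\cc$ sits on the interior of $e_j$, then $e_j\cdot\xx=0$, and $e_j$ joins $(1-\gamma)e_0,(1-\gamma)q_0$, which already span $\xx^\perp$ by Lemma~\ref{Lemma:image1-gamma}. In each subcase the resulting linear dependence produces a tangent vector transverse to the degenerate stratum, whose perimeter variation is then shown to be negative by the same product-to-sum identity. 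The main technical hurdle throughout is pinning down the sign of the distinguished component that drives the deformation into $\P$ rather than along the boundary; once that sign is secured by the geometric setup of $e_0$, $q_0$ and $\gamma$, the negativity of $\dot L$ reduces to the trigonometric identity displayed above.
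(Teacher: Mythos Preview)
Your treatment of the collapsed-edge case ($l_i=0$) is correct and in fact more explicit than the paper's, which simply cites Lemma~11 of \cite{Porti}; your linear dependence $\sin\beta_{i+1}\,e_{i-1}+\sin(\beta_i+\beta_{i+1})\,e_i+\sin\beta_i\,e_{i+1}=0$ and the product-to-sum computation of $\dot L$ are clean and self-contained.

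The gap is in the cone-point cases, where your sketch does not actually establish the two signs you need. When $\cc$ sits at the first vertex you have $l_0=0$, $q_0=\xx$, and you correctly observe that $(1-\gamma)e_0,\,e_1,\,e_n\in\xx^\perp$, forcing a linear dependence $a\,(1-\gamma)e_0+b\,e_1+c\,e_n=0$. But $e_0$ here depends on the auxiliary parameter $\theta$, which is \emph{free} when $l_0=0$ (rotating $e_0$ about $\xx$ does not change the polygon). For a generic choice of $\theta$ the coefficients $a,b,c$ need not have the signs you want, and your claim that ``the negativity of $\dot L$ reduces to the trigonometric identity displayed above'' does not hold: that identity involves the two interior angles $\beta_i,\beta_{i+1}$, whereas in the cone-point case the geometry is governed by $\alpha$ and $\beta_n$, and moreover $\dot l_0$ enters the constraint but not the perimeter, so the combinatorics are genuinely different.

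The paper resolves this by \emph{choosing} $\theta$ so that the bisector of $e_0,\gamma e_0$ coincides (with opposite half-lines) with the bisector of $e_1,e_n$; then $e_0-\gamma e_0$ and $e_1+e_n$ are both perpendicular to that bisector and point the same way, giving $(1-\gamma)e_0=\lambda(e_1+e_n)$ with $\lambda>0$ directly from the picture. This yields $\dot l_0=1>0$ and $\dot L=\dot l_1+\dot l_n=-2\lambda<0$ with no further computation. The cone-point-on-edge case then follows by treating the hit point as a vertex of angle $\pi$. Your framework is compatible with this, but you need the choice of $\theta$ (or an equivalent sign argument) to close the proof.
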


\begin{Lemma}
 \label{Lemma:existence}
There exists a unique polygon in $\P$ with an inscribed equidistant.
\end{Lemma}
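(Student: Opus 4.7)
The plan is to reduce existence and uniqueness of the inscribed-equidistant polygon to a single monotone real equation in the radius-like parameter of the candidate equidistant, treating the three types of $\cc$ in parallel.

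First I would parametrize the equidistants around $\cc$ by a real parameter: the distance $r$ from $\cc$ in the cone-point and geodesic cases, or the horocyclic level in the cusp case. Given $E_r$, a polygon inscribing it with prescribed angles $\beta_1,\dots,\beta_n$ is determined, up to rotation around $\cc$, by the cyclic sequence of tangent points on $E_r$. At each vertex $v_i$ the perpendicular from $v_i$ to $E_r$ bisects $\beta_i$ by the local reflection symmetry, and standard hyperbolic trigonometry in the right triangle with right angle at the tangent point and angle $\beta_i/2$ at $v_i$ gives an explicit formula for the arc of $E_r$ in the polygon's $v_i$-region: in the cone-point case this arc subtends a central angle $\sigma_i$ at $\cc$ with $\sin(\sigma_i/2)=\cos(\beta_i/2)/\cosh r$; in the cusp case the horocyclic arc length equals $2\cos(\beta_i/2)$, independent of the horocycle; in the geodesic case it equals $2\operatorname{arcsinh}(\sin(\beta_i/2)/\sinh r)$ in a suitable coordinate on $E_r$.

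Next, the polygon closes up in $\XX$ iff the sum of these arcs equals the length of $E_r$ in $\XX$, yielding a single real equation in $r$. For a cone point this reads $\sum_i\sigma_i(r)=\alpha$; the left side is continuous and strictly decreasing from $n\pi-\sum_i\beta_i$ as $r\to 0^+$ to $0$ as $r\to\infty$, and hypothesis~\eqref{eqn:hypothesis} places $\alpha$ precisely in that range. For a cusp, the left side is the positive constant $2\sum_i\cos(\beta_i/2)$ while the horocycle length in $\XX$ is a strictly decreasing bijection of the horocyclic parameter onto $(0,\infty)$, so there is a unique matching horocycle. For a geodesic boundary of length $L$, the closure reads $\sum_i 2\operatorname{arcsinh}(\sin(\beta_i/2)/\sinh r)=L$; the left side strictly decreases from $+\infty$ to $0$ as $r$ ranges over $(0,\infty)$, so matches $L$ at exactly one $r$.

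Finally, the polygon so constructed belongs to $\P$: each edge is tangent to $E_r$ and the polygon encircles $E_r$, so $\cc$ lies on the convex side of every angle and the polygon separates $\cc$ from the infinite end. The main obstacle is verifying the strict monotonicity and the correct boundary limits of the closure function in each case; the cone-point case crucially relies on hypothesis~\eqref{eqn:hypothesis}, while the cusp case is structurally different in that monotonicity comes from the $r$-dependence of the target length rather than of the summed arcs.
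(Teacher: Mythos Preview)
Your approach is essentially the same as the paper's: the paper glues symmetric building blocks (doubled right triangles in the cone-point and cusp cases, doubled Lambert quadrilaterals in the geodesic case), which is exactly your per-vertex decomposition, and then uses the same monotonicity in the radius parameter to solve the closure equation uniquely. One small slip: in the geodesic case the Lambert-quadrilateral identity gives $\sinh(s_i)=\cos(\beta_i/2)/\sinh r$, so your $\sin(\beta_i/2)$ should be $\cos(\beta_i/2)$; this does not affect the monotonicity or the limits, hence the argument stands.
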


\begin{proof}[Proof of Lemma~\ref{Lemma:properness}]
Seeking a contradiction, assume that 
we have a sequence of parameters in $\overline{\P}$ with $l_0\to+\infty$
but  $l_1,\ldots,l_n\geq 0$ are bounded. This is not possible because the 
distance 
between $g(l_0)$ and $\gamma g(l_0)$  converges to infinity as $l_0\to+\infty$, 
but this distance is 
bounded by the perimeter
$l_1 +\cdots+ l_n$. This establishes properness when 
 $\cc$ is a cone point or a geodesic.
When $\cc$ is a cusp, 
there could be a sequence of polygons 
with $l_0\to-\infty$, while $l_1,\ldots,l_n\geq 0 $ are bounded. This implies 
that the 
sequence of polygons are contained in horodiscs with area going to zero, but 
this contradicts Gauss-Bonnet theorem: the area depends only on the angles
$\beta_1,\ldots,\beta_n$.
\end{proof}

\begin{proof}[Proof of Lemma~\ref{Lemma:critical}]
 Being a critical point means that whenever $\dot l_0$, $\dot \theta$, 
 $\dot l_1,\ldots, \dot l_n$ satisfy
 $$
   \dot{l}_0 (1-\gamma) e_0+\dot{\theta} (1-\gamma)  q_0+
   \dot{l}_1 e_1 + \cdots + \dot{l}_n e_n  
   = 0\, ,
 $$
 then $\dot{l}_1+\cdots +\dot{l}_n=0$. Let $M$ be the matrix of 
 size $3\times (n+2)$ defined by columns as in Equation~\eqref{eqn:MatrixM},
 in the proof of Proposition~\ref{Proposition:tangent}. By the proof of the 
 same proposition, $\operatorname{rank}(M)=3 $. Let $\overline M$ be the matrix
 of size $4\times (n+2)$ obtained by adding the row
 $$
  \begin{pmatrix}
   0 & 0 & 1 & 1 & \cdots & 1
  \end{pmatrix}
 $$
 to the bottom of $M$. Being a critical point means that $\ker M= \ker\overline 
M$, 
i.e.\ that $\operatorname{rank}(\overline M)=3$.
Set 
$$
\begin{pmatrix}
 z^0 \\ z^1 \\ z^2 \\ 1 
\end{pmatrix}
\in \ker  (\overline {M}^t)
\quad \text{ and } \quad
z= 
\begin{pmatrix}
- z^0 \\ z^1 \\ z^2  
\end{pmatrix}\neq 0\, .
$$
By hypothesis
$$
((1-\gamma) e_0)\cdot z = 
((1-\gamma) q_0)\cdot z = 0.
$$
Thus, by Lemma~\ref{Lemma:image1-gamma} $z$ is a multiple of $\xx$: $z = 
\lambda \xx$ for some $\lambda\in \RR\setminus\{0\}$.
Hence 
$$
l_1\cdot \xx = \cdots = l_n\cdot \xx= -1/ \lambda.
$$
By Lemma~\ref{Lemma:equidistant}, and discarding the values of  $\lambda$ 
that contradict convexity, the lemma is proved.
\end{proof}

\begin{proof}[Proof of Lemma~\ref{Lemma:totheinterior}]
We consider first the case where the cone point $\cc$ meets 
a single vertex, say the first one.
By convexity,  
$\beta_n+\alpha > 2\pi$. By the previous construction 
$l_0=0$, and we aim to deform the parameters so that $l_0$ increases but the 
perimeter 
decreases.
When $l_0=0$, deforming $\theta$ does not change the resulting polygon. Thus 
we chose $\theta$  so that the line that bisects $e_0$ and $\gamma e_0$
is the same that bisects $e_1$ and $e_n$ but the corresponding half-lines are 
opposite, see Figure~\ref{fig:lines0}.

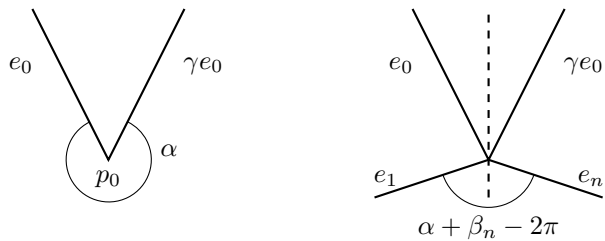
\begin{figure}
\begin{center}
\begin{tikzpicture}[scale=.5]
 \draw [thick] (-2,4)-- (0,0) --(2,4);
 \draw [thin] (-.5,1) arc [radius=1.12, start angle=117, end angle= 423];
 \node at (1.6,.25) {$\alpha$};
 \node at (-2.3,2.5) {$e_0$};
 \node at (2.5,2.5) {$\gamma e_0$};
 \node at (0,-.5) {$ p_0$};
 \draw [thick] (8,4)-- (10,0) --(12,4);
  \draw [thick] (7,-1)-- (10,0) --(13,-1);
  \draw [thick, dashed] (10,-1)--(10,4);
\draw  [thin] (8.8,-.4) arc  [radius=1.26, start angle=198.4, end angle= 341.6];
 \node at (7.7,2.5) {$e_0$};
\node at (12.5,2.5) {$\gamma e_0$};
 \node at (7.3,-.5) {$e_1$};
\node at (12.7,-.5) {$e_n$};
\node at (10,-1.8) {$ \alpha+\beta_n-2\pi $};
 \end{tikzpicture}
\end{center}
   \caption{The relative position of the lines $e_0$ and $\gamma e_0$ and the 
vertex of the polygon
}\label{fig:lines0}
\end{figure}

\begin{figure}
\begin{center}
\begin{tikzpicture}[scale=.5]
 \draw [thin] (-2,4)-- (0,0) --(2,4);
   \draw [thin] (-3,-1)-- (0,0) --(3,-1);
   \draw [very thick] [<->] (-1,2)--(0,0)--(1,2);
      \draw [very thick] [<-] (-1.5,-.5)--(0,0);
      \draw [very thick] [<-] (0,0)--(1.5,-.5);
 \node at (-2.3,2.5) {$e_0$};
 \node at (2.5,2.5) {$\gamma e_0$};
  \node at (-2.7,-.5) {$e_1$};
\node at (2.7,-.5) {$e_n$};

 \draw [very thick] [->] ((10,0)--(9,2);
 \draw [very thick] [->] ((10,0)--(9,-2);
 \draw [very thick] [->] ((10,0)--(8.5,-.5);
 \draw [very thick] [->] ((10,0)--(8.5,.5);
 \node at (7.7,2) {$e_0$};
\node at (7.7,-2) {$-\gamma e_0$};
 \node at (7.3,-.5) {$e_1$};
\node at (7.3,.5) {$e_n$};
 \end{tikzpicture}
\end{center}
   \caption{The oriented lines viewed as  vectors in $T_{p_0}\HH^2$. 
}\label{fig:lines1}
\end{figure}
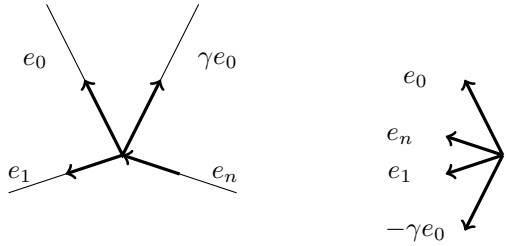

Since $\xx=p_0=q_0$ belongs to the lines $e_0$, $\gamma e_0$, $e_1$ and $e_n$, 
we view   them as tangent 
vectors 
to $\xx$, i.e.~they lie in the plane $T_{\xx}\HH^2$.
Now $e_0 - \gamma e_0$ and $e_1+e_n$ are both tangent vectors perpendicular to 
the bisector, and they both  point in the same direction (see 
Figure~\ref{fig:lines1}):
$$
(1-\gamma) e_0= \lambda (e_1+e_n)\qquad \textrm{ for some } \lambda> 0.
$$
Hence we may consider a deformation tangent to the vector
$\dot{l}_0=1$, $\dot{\theta}=0 $, $\dot l_1=\dot l_n=-\lambda$, $\dot l_2=
\dot l_3=\cdots =\dot l_{n-1}=0$. This is a vector tangent to the manifold 
in the equations defined in Proposition~\ref{Proposition:tangent}, and we have 
shown that this is a smooth point, see Remark~\ref{Remark:outofP}. Hence the 
tangent vector corresponds to a 
deformation, and by construction it  pushes the cone point to the 
interior of the polygon ($\dot l_0>0$) and the derivative of the perimeter is
\[
\dot l_1+\cdots +\dot l_n= - 2 \lambda <0\, . 
\]

 When $\cc$ meets the interior of an edge, the proof  is 
analogous by viewing an interior point as a vertex of angle $\pi$.
When some of the $l_i$ vanishes, this is precisely the content of Lemma~11 in 
 \cite{Porti}.
In general, the  tangent vectors to deformations can be added in 
order to combine the different deformations, namely pushing the cone point 
away from the polygon and  increasing the length of edges of length 0 in the 
same deformation, using again Remark~\ref{Remark:outofP}.
\end{proof}

\begin{proof}[Proof of Lemma~\ref{Lemma:existence}]
 The existence and uniqueness is proved by gluing certain polygons. 
 
 Assume first that $\cc$ is a cusp. For each vertex $i$, consider an ideal 
hyperbolic triangle with angles $0$, $\pi/2$, and $\beta_i/2$. Double this 
triangle by a reflection on the edge opposite to the right angle,
obtaining a quadrilateral with angles $0$, $\pi/2$, $\beta_i$, and $\pi/2$,
see Figure~\ref{fig:quad0}.
The angles do not determine this quadrilateral, there are quadrilaterals that 
are non symmetric, but 
this is the only one whose finite edges are tangent to an horocycle centered at 
the ideal point.
From those quadrilaterals one can construct the polygon, and it is unique by 
the 
tangency to the horocycle.

\begin{figure}[h]
\begin{center}
\begin{tikzpicture}[scale=.5]
 \draw [very thick] (0,0) arc [radius=20, start angle=270, end angle= 296];
  \draw [very thick] (10.35,0) arc [radius=6, start angle=231, end angle= 206];
\draw [very thick] (0,0) arc [radius=20, start angle=90, end angle= 64];
  \draw [very thick] (10.35,0) arc [radius=6, start angle=129, end angle= 154];
  \draw [thin] (8.45,1.8)-- (8.60,1.55) --(8.85,1.7);
    \draw [thin] (8.45,-1.8)-- (8.60,-1.55) --(8.85,-1.7);
    \node at (-.5,0) {$0$};
        \node at (11,0) {$\beta$};
\draw [thin] (10,-.3) arc [radius=.5, start angle=220, end angle=140];
  \draw [thin] (8.72,-2) arc [radius=4.75, start angle=-25, end angle= 25];
 \end{tikzpicture}
\end{center}
   \caption{The quadrilateral in the proof of 
Lemma~\ref{Lemma:existence} when $\cc$ is a cusp, with the 
inscribed horocycle.}\label{fig:quad0}
\end{figure}
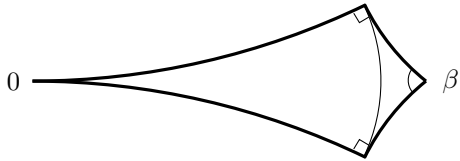

When $\cc$ is a cone point, given $\alpha_i$ the building block is a triangle 
with angles $\alpha_i/2$, $\pi/2$ and $\beta_i/2$. Double it along the 
long 
edge, to get a quadrilateral with angles $\alpha_i$, $\pi/2$, $\beta_i$, and  
$\pi/2$, see Figure~\ref{fig:quad1}, so that the edges that meet at angle 
$\beta_i$ 
are tangent to circle centered at the vertex of angle $\alpha_i$. Let 
$r(\alpha_i,\beta_i)$ denote the 
radius of this circle, which is the length of the two edges adjacent to 
the vertex with angle $\alpha_i$. For fixed $\beta_i$ the
radius $r(\alpha_i,\beta_i)  $ is strictly decreasing on $\alpha_i$, with 
 $r(\pi-\beta_i,\beta_i)=0$  and $r(0,\beta_i)=+\infty$. Thus by gluing the 
blocs one can realize any cone angle $< n\pi-\beta_1-\cdots -\beta_n$, in 
particular
$\alpha$ by Assumption~\eqref{eqn:hypothesis}. Uniqueness also follows.

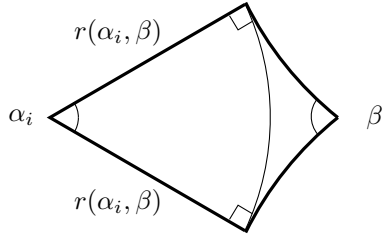
\begin{figure}[h]
\begin{center}
\begin{tikzpicture}[scale=.75]
 \draw [very thick]  (8.72,-2) -- (5.3,0) -- (8.72,2);
 \draw [very thick] (10.35,0) arc [radius=6, start angle=231, end angle= 206];
  \draw [very thick] (10.35,0) arc [radius=6, start angle=129, end angle= 154];
  \draw [thin] (8.45,1.8)-- (8.60,1.55) --(8.85,1.7);
    \draw [thin] (8.45,-1.8)-- (8.60,-1.55) --(8.85,-1.7);
        \node at (11,0) {$\beta$};
\draw [thin] (10,-.3) arc [radius=.5, start angle=220, end angle=140];
  \draw [thin] (8.72,-2) arc [radius=4.75, start angle=-25, end angle= 25];
    \draw [thin] (5.8,0) arc [radius=.5, start angle=0, end angle=33];
     \draw [thin] (5.8,0) arc [radius=.5, start angle=0, end angle=-33];
     \node at (4.8,0) {$\alpha_i$};
     \node at (6.5,1.5) {$r(\alpha_i,\beta)$};
         \node at (6.5,-1.5) {$r(\alpha_i,\beta)$};
 \end{tikzpicture}
\end{center}
   \caption{The quadrilateral in the proof of 
Lemma~\ref{Lemma:existence} when  $\cc$ is a cone point, with 
the inscribed circle.}\label{fig:quad1}
\end{figure}

When $\cc$ is a geodesic, the building blocks are similar: symmetric 
pentagons with four right angles and   one angle $\beta_i$ (that is the 
double of a quadrilateral with three right angles and one angle $\beta_i/2$),
Figure~\ref{fig:quad2}.
The argument now is similar, as $r=r(d_i,\beta_i)$ is a strictly decreasing 
function on the length $d_i$ of the segment opposite to $\beta_i$, 
$r(+\infty, \beta_i)=0$ (approaching a triangle with two ideal vertices), and 
$r(0, \beta_i)=+\infty$ (approaching a triangle with one ideal vertex).
\end{proof}

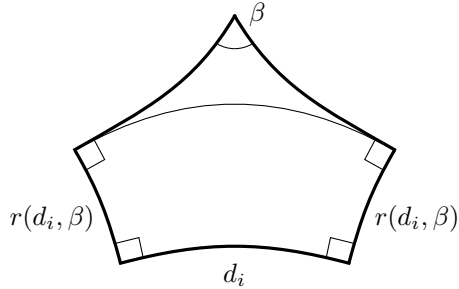
\begin{figure}[h]
\begin{center}
\begin{tikzpicture}[scale=.3]
 \draw [very thick] (-5.07,0) to [out=15,in=165] (5.07,0);
 \draw [very thick] (5,0) to [out=75,in=240] (7,5);
 \draw [very thick] (7.07,5) to [out=150,in=-60]  (-0.03,11);
 \draw [very thick] (-5,0) to [out=105,in=300] (-7,5);
   \draw [very thick] (-7.07,5) to [out=30,in=240]  (0.03,11);
   \draw [thin] (-7,5) to [out=30,in=150]  (7,5);
   \draw [thin] (4,0.2)--(4.2,1.15)--(5.2,.9) ;
   \draw [thin] (-4,0.2)--(-4.2,1.15)--(-5.2,.9) ;
   \draw [thin] (6.15,5.45)--(5.7,4.6)--(6.5,4.15) ;  
   \draw [thin] (-6.15,5.45)--(-5.7,4.6)--(-6.5,4.15) ; 
   \draw [thin] (0,9.5) arc [radius=1.5, start angle=270, end angle=300];
   \draw [thin] (0,9.5) arc [radius=1.5, start angle=270, end angle=240];
   \node at (0,-.5) {$d_i$};
   \node at (8,2) {$r(d_i,\beta)$};   
   \node at (-8,2) {$r(d_i,\beta)$};   
   \node at (1,11) {$\beta$};
 \end{tikzpicture}
\end{center}
   \caption{The pentagon in the proof of 
Lemma~\ref{Lemma:existence} when  $\cc$ is a geodesic, with 
the inscribed equidistant.}\label{fig:quad2}
\end{figure}

This concludes the proof of Theorem~\ref{theoremc}. Notice that 
Lemma~\ref{Lemma:existence} also establishes that $\P$ is non empty. The proof 
of  Theorem~\ref{theoremc} also shows that $\P$ has dimension $n-1$. One may 
still ask whether the edge lengths and angles determine a polygon in $\P$, as 
there 
are two further parameters that determine the position relative to $\cc$. 

\begin{Lemma} 
\label{Lemma:uniq}
A polygon in $\P$ is determined by its edge lengths $l_1,\ldots, l_{n}> 0$
and angles $\beta_1,\ldots,\beta_n\in (0,2\pi)$.
In particular its position relative to $\cc$ is determined by the 
lengths and 
the angles. 
\end{Lemma}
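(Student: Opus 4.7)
The plan is to reduce the statement to uniqueness of the lift to $\HH^2$ in the parametrization set up in Section~\ref{sec:polys}. Suppose $\pi_1,\pi_2\in\P$ share the same edge lengths $l_1,\ldots,l_n$ and angles $\beta_1,\ldots,\beta_n$. I would lift each to a polygonal arc $\sigma_i\subset\HH^2$ with initial vertex $q_0^{(i)}=g(l_0^{(i)})\in e_0$ and initial direction obtained by rotating $\dot g(l_0^{(i)})$ by $\theta^{(i)}$, so that the arc closes up under the prescribed isometry $\gamma$. Since the intrinsic polygonal data of the two arcs coincide, there is a unique orientation-preserving isometry $h\in\mathrm{SO}_0(2,1)$ with $h\sigma_1=\sigma_2$. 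Comparing the monodromies on both sides gives $h\gamma h^{-1}=\gamma$, so $h$ lies in the centralizer of $\gamma$, which in each of the three cases is the one-parameter subgroup of $\mathrm{SO}_0(2,1)$ stabilizing $\xx$.

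The remaining step is to show that the constraint $q_0^{(i)}\in e_0$ forces $h=\mathrm{id}$, which I would carry out case by case using the position of $e_0$ relative to $\xx$ fixed at the start of Section~\ref{sec:polys}. In the cusp case, $e_0$ ends at $\xx$ and meets each horocycle centered at $\xx$ in a single point; since the orbits of the parabolic $h$ are exactly these horocycles, $h$ fixes $q_0^{(1)}$, and since no nontrivial parabolic fixes a point of $\HH^2$, we conclude $h=\mathrm{id}$. In the geodesic case, $e_0$ is perpendicular to $\xx$ at $p_0$ and meets each equidistant to $\xx$ in a unique point; the same reasoning shows $h$ fixes $q_0^{(1)}$ and hence is trivial. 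In the cone-point case, $e_0$ is a line through $\xx$ and a rotation orbit meets $e_0$ in two antipodal points; the convention $l_0^{(i)}\geq 0$ keeps both $q_0^{(i)}$ on the same half-line $g([0,\infty))$, so $q_0^{(1)}=q_0^{(2)}$, and since $l_0^{(i)}>0$ for polygons in the interior $\P$ (the alternative being a degeneration in $\overline{\P}\setminus\P$), the rotation $h$ fixes $q_0^{(1)}\neq\xx$ and is therefore the identity.

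The main obstacle I anticipate is the cone-point case, where the centralizer of $\gamma$ is compact and the conclusion genuinely can fail for polygons in $\overline{\P}\setminus\P$ having $l_0=0$ (a polygon vertex at the cone point). Thus the argument truly uses both the half-line convention on $e_0$ and the strict positivity $l_0>0$ that holds in the interior of $\P$. Once $h=\mathrm{id}$ is established in all three cases, $\sigma_1=\sigma_2$ and hence $\pi_1=\pi_2$ as polygons in $\XX$, which yields the lemma and its consequence that the position relative to $\cc$ is likewise determined by the lengths and angles.
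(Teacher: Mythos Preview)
Your argument is correct, but it follows a different route from the paper's own proof. You work inside the coordinate chart of Section~\ref{sec:polys}: both polygons are lifted so that their initial vertex lies on the fixed geodesic $e_0$, the congruence of the unfolded arcs produces an isometry $h$, the closing condition $w=\gamma v$ (a \emph{frame} condition) forces $h$ into the centralizer $Z(\gamma)$, and then the transversality of $e_0$ to the $Z(\gamma)$-orbits pins $h$ down to the identity. The paper instead fixes a single unfolding of the polygonal arc in $\HH^2$ and argues that among all isometries taking the initial endpoint to the terminal endpoint there is a unique one of the prescribed type (parabolic, or elliptic of angle $\alpha$, or loxodromic of length $r$) with the cusp/cone point/axis on the convex side; the convexity hypothesis is what cuts the two candidates down to one. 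Your approach has the virtue of making explicit how the normalization on $e_0$ and the constraint $l_0>0$ are actually used---which, as you note, is exactly where the argument would break for boundary configurations in $\overline{\P}\setminus\P$. The paper's approach is shorter and more geometric, and has the advantage of yielding the statement directly for polygons in $\XX$ (up to the natural ambient symmetry) without reference to the particular chart.
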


\begin{proof}
We unfold the polygon in $\HH^2$: namely we consider  a piecewise geodesic path 
consisting 
of $n$ segments of lengths $l_1,\ldots, l_{n}> 0$ and angles 
$\beta_1,\ldots,\beta_{n-1}\in (0,2\pi)$.
When $c$ is a cusp, the lemma follows because there exists a unique 
\emph{oriented} parabolic isometry  that joins the endpoints of this path. In 
fact, without taking into account the orientation  there are 
two of them, but if we want the cusp to be in the convex side there is only 
one choice (two different points in $\HH^2$ can be joined by precisely two 
curves of constant geodesic curvature $1$). This establishes the lemma when 
$\cc$ is a cusp. Notice that  joining 
the endpoints by a parabolic isometry is a necessary condition, but not 
sufficient. 
The  proof when $\cc$ is a cone point or a geodesic is analogous, instead of 
parabolic isometries one must consider rotations of given angle, or loxodromic 
elements of given translation length, respectively. 
\end{proof}

\section{Spines of minimal length}
\label{sec:spines}

Let $F$ be a non compact, complete, and  orientable  hyperbolic surface with 
finite topology. As 
said in the introduction, a spine is a graph in $F$ so that $F$ retracts to it, 
and 
the proof of Martelli, Novaga, Pluda, and Riolo  \cite{Martelli}  
in the compact case yields 
the existence of  spines of minimal length. Those are piecewise geodesic graphs 
with 
trivalent vertices, so that the angles are $2\pi/3$.

\begin{proof}[Proof of Corollary~\ref{coro:spine}]
The endpoints of surfaces in $\M_{g,p}$ are cusps, recall that $p\geq 1$. If 
we 
cut open a surface in  $\M_{g,p}$ along a spine of minimal length, then we 
obtain  polygons with angles $2\pi/3$ in punctured discs, one for each end of 
the surface.
Since the perimeter is minimized by the  polygon with an inscribed horocycle, 
this 
length is minimized precisely by surfaces obtained from these polygonal 
domains (that in particular are regular).  
Thus surfaces minimizing the spine systole are an orbifold covering of 
the 2-sphere with 
a puncture and two 
cone points of order $2$ and $3$ respectively, namely the modular orbifold 
$\HH^2/\mathrm{PSL}(2,\ZZ)$. Therefore the surfaces that minimize the spine 
systole are   $\HH^2/\Gamma$ for 
some $\Gamma < \mathrm{PSL}(2,\ZZ)$. In fact $\Gamma<\Gamma(2)$, see for 
instance the proof of \cite[Proposition~A.4]{Gendulphe}.

On the other hand, since the modular orbifold $\HH^2/ \mathrm{PSL}(2,\ZZ)$ 
has a horodisc centered at the cusp whose interior is properly 
embedded and its closure has self-intersection precisely at the cone point of 
order $2$, every modular surface is obtained from punctured polygonal domains 
with an inscribed horocycle as above.
\end{proof}

The edge length of the  polygon of angles $\beta$ in a punctured disc with an 
inscribed horocycle is 
\begin{equation}
2 \log\frac{1+\cos\beta/2}{\sin(\beta/2)}= 2\sinh^{-1} (\cot(\beta/2)) 
\end{equation}
independently of the number of edges.
For spines of minimal length, we are interested in $\beta=2\pi/3$. This yields
\begin{equation}
\label{eqn:2pi3}
2\log\frac{1+1/2}{\sqrt{3}/2}=\log (3 ).
\end{equation}
Thus we have:

\begin{Corollary}
\label{cor:length}
 Let $F$ be an orientable  hyperbolic surface  with finite topology, of genus 
$g$ and with $p\geq  1$ ends. Then the length $l$ of a spine in $F$ satisfies
$$
l\geq 3 (2 g+p-2)\log({3}),
$$
with equality if and only if $F=
 \HH^2/\Gamma$ for 
some $\Gamma < \Gamma(2) < \mathrm{PSL}(2,\ZZ)$
and the spine 
has minimal length. 
\end{Corollary}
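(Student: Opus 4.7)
The plan is to reduce the inequality to a per-component application of Theorem~\ref{theoremc} combined with an Euler-characteristic count of the edges of a minimizing spine. First, I will invoke the Martelli--Novaga--Pluda--Riolo structure result recalled at the start of Section~\ref{sec:spines}: a spine of minimal length has geodesic edges meeting trivalently at angles $2\pi/3$. Cutting $F$ open along such a spine produces one component for each of the $p$ cusps, each a polygon embedded in a punctured disc with all interior angles equal to $2\pi/3$. Convexity of the angles (needed for membership in $\P$) is automatic because $2\pi/3<\pi$, and each cusp is separated from the infinite end by its surrounding polygon because the complementary region of a non-compact spine deformation retracts onto the corresponding cusp.

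Second, I will count edges. Let $V$ and $E$ denote the vertex and edge counts of the spine. Trivalence gives $3V=2E$, and since the spine is homotopy equivalent to $F$,
\[
-E/3 \;=\; V-E \;=\; \chi(F) \;=\; 2-2g-p,
\]
so $E=3(2g+p-2)$. Each spine edge appears twice along the boundaries of the polygonal pieces, hence the total perimeter of all components equals $2l$, and the total number of polygon sides equals $2E$.

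Third, I will apply Theorem~\ref{theoremc} component-wise. By the formula~\eqref{eqn:2pi3}, every edge of the extremal polygon with angles $2\pi/3$ inscribed in a horocycle has length $\log 3$, so its perimeter is (number of sides)$\cdot\log 3$. Summing over the $p$ components,
\[
2l \;\geq\; 2E\log 3 \;=\; 6(2g+p-2)\log 3,
\]
which yields $l\geq 3(2g+p-2)\log 3$. For the equality case, uniqueness in Theorem~\ref{theoremc} forces each polygonal component to have an inscribed horocycle, placing $F$ in the situation analyzed in Corollary~\ref{coro:spine} and identifying $F=\HH^2/\Gamma$ with $\Gamma<\Gamma(2)$.

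The main obstacle I anticipate is not the inequality itself, which is essentially assembled from pieces already in the paper, but rather verifying carefully that every cut-open component really lies in $\P$ as defined in Section~\ref{sec:polys}. In particular one must confirm that the boundary polygon separates the cusp from the infinite end and that the cusp lies on the convex side of each $2\pi/3$ angle; both should follow from the minimality of the spine and the angle condition, but deserve an explicit verification.
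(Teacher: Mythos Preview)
Your argument is essentially the same as the paper's for the case where all $p$ ends of $F$ are cusps, and that part is fine. But the corollary as stated (and the framing at the start of Section~\ref{sec:spines}) covers any complete orientable hyperbolic surface of finite type, so some of the $p$ ends may be funnels rather than cusps. In that situation cutting along a minimal spine does \emph{not} produce polygons in punctured discs $\XX_0$, so you cannot invoke Theorem~\ref{theoremc} on those pieces, and your inequality stalls.

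The paper deals with this in two steps that you are missing. First, it observes that the nearest-point retraction of $F$ onto its convex core is distance-decreasing, so one may replace $F$ by a surface whose ends are cusps and/or totally geodesic boundary components without increasing the length of any spine. Second, for a boundary-type end the relevant comparison is Theorem~\ref{theoremMain} with $\cc$ a geodesic, and one then needs the inequality that the perimeter of the inscribed-equidistant polygon with all angles $2\pi/3$ around a geodesic boundary is strictly larger than in the cusped case; the paper extracts this from the explicit building blocks of Lemma~\ref{Lemma:existence} (compare Figures~\ref{fig:quad0} and~\ref{fig:quad2}). This extra comparison is also what forces the equality case to have only cusped ends, after which your appeal to Corollary~\ref{coro:spine} is legitimate. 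As a minor point, you should state explicitly that bounding the length of a \emph{minimal} spine suffices to bound the length of every spine; you use this reduction implicitly when you pass to the trivalent $2\pi/3$ structure.
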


\begin{proof}
For a general surface, as its retraction to its convex core is distance 
decreasing, we may assume that $F$ is a surface with boundary components and 
cusps. Using the constructions of Lemma~\ref{Lemma:existence}, e.g.\ 
Figures~\ref{fig:quad0} and \ref{fig:quad2}, the length of a regular polygon 
with
angles $2\pi/3$ is bounded below by the cusped case, and   the minimum is 
realized by surfaces  $\HH^2/\Gamma$ for 
some $\Gamma < \Gamma(2) < \mathrm{PSL}(2,\ZZ)$. As a minimal  spine is a 
trivalent graph, the number of edges is $ -3\chi(F)= 3 ( 2 g + p-2) $. Hence 
the 
corollary follows from~\eqref{eqn:2pi3}.
\end{proof}

Finally, a spine of minimal length may be nonunique, but from 
Lemma~\ref{Lemma:uniq}, we deduce:

\begin{Corollary}
\label{cor:proper}
A surface is uniquely determined by the spine of minimal length. In particular 
$S\colon \M_{g,p}\to (0,+\infty)$ is proper.
\end{Corollary}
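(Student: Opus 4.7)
The plan is to prove uniqueness by cutting $F$ along the minimal-length spine into polygonal regions, each determined by its edge lengths via Lemma~\ref{Lemma:uniq}, and then to deduce properness of $S$ from this reconstruction together with the properness of the perimeter on $\overline\P$ (Lemma~\ref{Lemma:properness}).

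\emph{Reconstruction.} Given $F\in\M_{g,p}$ with minimal-length spine $\Sigma\subset F$, cutting along $\Sigma$ decomposes $F$ into $p$ disjoint punctured polygonal regions, one neighborhood of each cusp, exactly as in the proof of Corollary~\ref{coro:spine}. Each such region is a polygon in $\P$ (for the hyperbolic structure on the corresponding punctured disc $\XX_0$), with all interior angles equal to $2\pi/3$, and whose cyclic sequence of edge lengths is read off from the edges of $\Sigma$ on its boundary (with multiplicities coming from whether a given spine edge has one or two sides in the same region). By Lemma~\ref{Lemma:uniq} each of these polygons is determined, up to isometry of $\XX_0$, by its angles and edge lengths, hence by the portion of $\Sigma$ that bounds it. The embedded graph $\Sigma$ also encodes the pairing of boundary edges across regions and the orientations in which they are identified, so $F$ is recovered up to isometry by gluing the uniquely determined polygons along these matching pairs. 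This proves that $F$ is determined by $\Sigma$ as an embedded metric graph.

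\emph{Properness.} Fix $C>0$ and take a sequence $F_n\in\M_{g,p}$ with $S(F_n)\le C$. By Corollary~\ref{cor:length} each minimal spine $\Sigma_n$ is trivalent with exactly $3(2g+p-2)$ edges; since the combinatorial type of an embedded trivalent graph with a fixed number of edges in a surface of fixed genus and puncture number takes only finitely many values, we may pass to a subsequence along which this combinatorial type is constant. Each of the $p$ polygonal regions of $F_n\setminus\Sigma_n$ has perimeter at most $2\,\mathrm{length}(\Sigma_n)\le 2C$, so by Lemma~\ref{Lemma:properness} the corresponding sequences in $\overline\P$ are precompact; after a further diagonal subsequence each polygonal region converges in $\overline\P$. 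Gluing the limit polygons by the fixed combinatorial pattern produces a candidate limit for $F_n$, and by the reconstruction above this gives a convergent subsequence in $\M_{g,p}$ provided the limit polygons lie in $\P$ and not in $\overline\P\setminus\P$.

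\emph{Main obstacle.} The delicate point is ruling out degeneration to $\overline\P\setminus\P$, which would correspond either to a collapse of some spine edges (a polygon edge of length $0$) or to the cusp meeting the boundary of a polygonal region. In either case Lemma~\ref{Lemma:totheinterior} provides a perturbation pushing the configuration into $\P$ while strictly decreasing the perimeter of that region; applied to the polygonal regions of $F_n$ for $n$ large, this perturbation assembles into a competing spine on $F_n$ of strictly smaller length than $\Sigma_n$, contradicting minimality. Hence the limits lie in $\P$, the glued surface lies in $\M_{g,p}$, and $S^{-1}([0,C])$ is sequentially compact, proving properness.
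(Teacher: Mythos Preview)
Your reconstruction argument is correct and is exactly what the paper has in mind when it says the corollary follows from Lemma~\ref{Lemma:uniq}: cut along a minimal spine, recover each cusp region as a polygon in $\P$ from its ordered edge lengths, and reglue. The paper gives no further details, so up to this point you have simply made the implicit argument explicit.

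The properness argument, however, has a genuine gap in the ``Main obstacle'' paragraph. Lemma~\ref{Lemma:totheinterior} deforms a \emph{single} polygon inside the model $\XX_0$ by changing its edge lengths. On the surface $F_n$ each spine edge bounds two polygonal sides (possibly of the same region), so the edge lengths of neighbouring regions are coupled: you cannot perturb one region's edge lengths without simultaneously perturbing its neighbour's, and the tangent vectors supplied by Lemma~\ref{Lemma:totheinterior} for adjacent regions have no reason to be compatible on the shared edges. Furthermore, minimality of $\Sigma_n$ means minimality of the \emph{total} length (equivalently the sum of all perimeters), not of each region's perimeter separately, so even if you managed to decrease one region's perimeter this would not by itself contradict minimality of the spine. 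Finally, the sentence ``applied to the polygonal regions of $F_n$ for $n$ large'' is problematic: those regions lie in $\P$, not in $\overline\P\setminus\P$, so Lemma~\ref{Lemma:totheinterior} does not apply to them; it applies only to the limit polygon. (Also, the case ``cusp meeting the boundary'' never occurs here: that part of $\overline\P\setminus\P$ is only relevant when $\cc$ is a cone point.)

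The good news is that this obstacle is illusory. If some edge lengths tend to $0$, the limit polygons still have positive area (by Gauss--Bonnet, as in the proof of Lemma~\ref{Lemma:properness}) and still glue up along the fixed pattern to a complete hyperbolic surface in $\M_{g,p}$; the reconstruction map is continuous in the edge lengths, so $F_n$ converges to that surface. Thus no degeneration needs to be excluded. Alternatively, and more directly: any spine must cross every essential simple closed geodesic, so by the collar lemma $S(F)$ bounds the systole of $F$ from below, and Mumford compactness yields properness of $S$ immediately.
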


\begin{footnotesize}
\bibliographystyle{plain}


\noindent \textsc{Departament de Matem\`atiques, Universitat Aut\`onoma de 
Barcelona,\\  08193 Cerdanyola del Vall\`es (Spain)}

\noindent 
and 

\noindent 
\textsc{Barcelona Graduate School of Mathematics (BGSMath) }

\noindent \emph{porti@mat.uab.cat}
\end{footnotesize}

\end{document}